\newtheorem{theorem}{Theorem}[section]
\newtheorem{lemma}[theorem]{Lemma}
\newtheorem{proposition}[theorem]{Proposition}
\theoremstyle{remark}
\newtheorem{remark}[theorem]{Remark}
\numberwithin{equation}{section}
\begin{document}
\title[Simple zero property of holomorphic functions]{Simple zero property of some holomorphic functions on the moduli space of tori}
\author{Zhijie Chen}
\address{Department of Mathematical Sciences, Yau Mathematical Sciences Center,
Tsinghua University, Beijing, 100084, China }
\email{zjchen@math.tsinghua.edu.cn}
\author{Ting-Jung Kuo}
\address{Taida Institute for Mathematical Sciences (TIMS), National Taiwan University,
Taipei 10617, Taiwan }
\email{tjkuo1215@gmail.com}
\author{Chang-Shou Lin}
\address{Taida Institute for Mathematical Sciences (TIMS), Center for Advanced Study in
Theoretical Sciences (CASTS), National Taiwan University, Taipei 10617, Taiwan }
\email{cslin@math.ntu.edu.tw}

\begin{abstract}
We prove that some holomorphic functions on the moduli space of tori have
only simple zeros. Instead of computing the derivative with respect to the moduli parameter $\tau$, we introduce a conceptual
proof by applying Painlev\'{e} VI\ equation. As an application of this
simple zero property, we obtain the smoothness of all the degeneracy curves of
trivial critical points for some multiple Green function.

\end{abstract}
\maketitle

\section{Introduction}

Let $\mathbb{H}:=\{ \tau|\operatorname{Im}\tau>0\}$ be the upper half plane. A
meromorphic function $f(\tau)$ defined on $\mathbb{H}$ is called to satisfy
the \emph{simple zero property} if $f(\tau)$ has only simple zeros, i.e.
\[
f^{\prime}(\tau)\not =0\text{ \ whenever \ }f(\tau)=0.
\]
For example, we consider%
\[
F_{k}(\tau):=-(\log \vartheta_{1})_{zz}\left(  \tfrac{1}{2}\omega_{k}%
;\tau \right)  ,\text{ \ }k=1,2,3,
\]
where $\vartheta_{1}(z;\tau)$ is the odd theta function defined by%
\[
\vartheta_{1}(z;\tau):=-i\sum_{n=-\infty}^{\infty}(-1)^{n}e^{(n+\tfrac{1}%
{2})^{2}\pi i\tau}e^{(2n+1)\pi iz},
\]
and $\omega_{1}:=1$, $\omega_{2}:=\tau$, $\omega_{3}:=1+\tau$. Let
$\Lambda_{\tau}=\mathbb{Z+Z}\tau$ and $\wp(z)=\wp(z|\tau)$ be the Weierstrass
elliptic function with periods $1$ and $\tau$, defined by%
\[
\wp(z|\tau):=\frac{1}{z^{2}}+\sum_{\omega \in \Lambda_{\tau}\backslash
\{0\}}\left(  \frac{1}{(z-\omega)^{2}}-\frac{1}{\omega^{2}}\right)  ,
\]
and $e_{k}(\tau):=\wp(\frac{\omega_{k}}{2}|\tau)$ for $k\in \{1,2,3\}$. Let
$\zeta(z)=\zeta(z|\tau):=-\int^{z}\wp(\xi|\tau)d\xi$ be the Weierstrass zeta
function with two quasi-periods:%
\begin{equation}
\eta_{1}(\tau)=\zeta(z+1|\tau)-\zeta(z|\tau),\text{ \ }\eta_{2}(\tau
)=\zeta(z+\tau|\tau)-\zeta(z|\tau). \label{quasi}%
\end{equation}
Then in terms of these Weierstrass functions, $F_{k}(\tau)$ can be expressed
as%
\[
F_{k}(\tau)=\eta_{1}(\tau)+e_{k}(\tau),\text{ \ }k=1,2,3.
\]
The simple zero property of $F_{k}(\tau)$ can be proved by a direct
computation; we present it in Appendix \ref{DWRT} as a nice exercise. In
\cite[Theorem 3.2]{CKLW}, Wang and the authors gave a conceptual proof by
using Painlev\'{e} VI equation. Another example is the classical Eisenstein
series of weight $1$: $\mathfrak{E}_{1}^{N}(\tau;k_{1},k_{2})$ with
characteric $(\frac{k_{1}}{N},\frac{k_{2}}{N})\in \mathbb{Q}^{2}$. The simple
zero property for $\mathfrak{E}_{1}^{N}(\tau;k_{1},k_{2})$ was first proved by
Dahmen \cite{Dahmen2} and later by Wang and the authors \cite{CKLW}.
Differently from Dahmen's proof, again our approach in \cite{CKLW} is to apply
Painlev\'{e} VI equation.

In general, given a meromorphic function $f(\tau)$, it is usually rather hard
to show whether $f(\tau)$ satisfies the simple zero property or not, due to
the non-trivial calculation of the derivative with respect to the moduli
parameter $\tau$; see Appendix \ref{DWRT} for example.

In this paper, we want to give new examples by developing our idea in
\cite{CKLW}. Recall that $g_{2}(\tau)$ and $g_{3}(\tau)$ are the coefficients
of the cubic polynomial%
\[
(\wp^{\prime}(z|\tau))^{2}=4\wp(z|\tau)^{3}-g_{2}(\tau)\wp(z|\tau)-g_{3}%
(\tau),
\]
i.e.%
\[
g_{2}(\tau)=-4(e_{1}(\tau)e_{2}(\tau)+e_{1}(\tau)e_{3}(\tau)+e_{2}(\tau
)e_{3}(\tau)),
\]%
\[
g_{3}(\tau)=4e_{1}(\tau)e_{2}(\tau)e_{3}(\tau).
\]
Given $k\in \{0,1,2,3\}$ and $C\in \mathbb{C}\cup \{ \infty \}$, we define
holomorphic functions on $\mathbb{H}$ by%
\begin{equation}
f_{0,C}(\tau):=\left \{
\begin{array}
[c]{c}%
12(C\eta_{1}(\tau)-\eta_{2}(\tau))^{2}-g_{2}(\tau)(C-\tau)^{2}\text{ if
}C\not =\infty,\\
12\eta_{1}(\tau)^{2}-g_{2}(\tau)\text{ \  \  \  \  \ if \  \  \  \  \ }C=\infty,
\end{array}
\right.  \label{func-0}%
\end{equation}%
\begin{equation}
f_{k,C}(\tau):=\left \{
\begin{array}
[c]{l}%
3e_{k}(\tau)(C\eta_{1}(\tau)-\eta_{2}(\tau))\\
+(\tfrac{g_{2}(\tau)}{2}-3e_{k}(\tau)^{2})(C-\tau)\text{ \  \ if \  \ }%
C\not =\infty,\\
3e_{k}(\tau)\eta_{1}(\tau)+\tfrac{g_{2}(\tau)}{2}-3e_{k}(\tau)^{2}\text{
\  \ if \  \ }C=\infty,
\end{array}
\right.  \text{\ }k=1,2,3. \label{func-k}%
\end{equation}
These holomorphic functions are closely related to the Hessian of trivial
critical points of some multiple Green function; we explain it later. Our first main theorem is to prove

\begin{theorem}
\label{SZ-thm1}For $k\in \{0,1,2,3\}$ and $C\in \mathbb{C}\cup \{ \infty \}$,
$f_{k,C}(\tau)$ satisfies the simple zero property.
\end{theorem}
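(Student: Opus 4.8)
The plan is to avoid differentiating $f_{k,C}$ in $\tau$ altogether, and instead to identify each zero of $f_{k,C}$ with a movable branch point of a suitable solution of the elliptic form of the Painlev\'e VI equation, following the philosophy of \cite{CKLW}. Once this is set up, the order of vanishing of $f_{k,C}$ at such a point can be \emph{read off} from the rigid local expansion of the Painlev\'e solution, and this will force the order to be exactly one.

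First I would exhibit the geometric meaning of the parameter $C$. Using the Legendre relation $\eta_1\tau-\eta_2=2\pi i$ one checks that
\[
\frac{C\eta_1(\tau)-\eta_2(\tau)}{C-\tau}=\eta_1(\tau)+\frac{2\pi i}{C-\tau},
\]
and that the choice $C=\bar\tau$ reproduces the non-holomorphic term $-\pi/\operatorname{Im}\tau$ entering the real Green function on $E_\tau$. Thus $f_{k,C}(\tau)$ should be understood as the holomorphic continuation, in the variable $C$ replacing $\bar\tau$, of the Hessian of the (multiple) Green function at its trivial critical point $\tfrac{\omega_k}{2}$, with $\omega_0:=0$. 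In particular a zero $\tau_0$ of $f_{k,C}$ means that $\tfrac{\omega_k}{2}$ is a \emph{degenerate} trivial critical point at $\tau_0$, so that a pair of nontrivial critical points $p_C(\tau)$ collides with $\tfrac{\omega_k}{2}$ as $\tau\to\tau_0$.

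The second step is to recall, via the method of \cite{CKLW}, that the nontrivial critical points, written through the standard substitution $\lambda_C=(\wp(p_C)-e_1)/(e_2-e_1)$, are governed by a solution of Painlev\'e VI, and that the four branch values $p_C\in\{0,\tfrac{\omega_1}{2},\tfrac{\omega_2}{2},\tfrac{\omega_3}{2}\}$ correspond exactly to $\lambda_C$ reaching the four singular values $\{0,1,t,\infty\}$ of the equation, indexed by $k\in\{0,1,2,3\}$. The collision at $\tau_0$ is therefore a movable branch point of the Painlev\'e solution, where the Painlev\'e property pins down the local form $p_C(\tau)-\tfrac{\omega_k}{2}=c\,(\tau-\tau_0)^{1/2}+\cdots$ with $c\neq0$ for $k=1,2,3$. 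Feeding this into the single-valued quantity that the dictionary identifies with $f_{k,C}$ (for $k=1,2,3$, a nowhere-zero holomorphic multiple of $\wp(p_C)-e_k$, which near $\tau_0$ equals $\tfrac12\wp''(\tfrac{\omega_k}{2})\,c^2(\tau-\tau_0)+O((\tau-\tau_0)^{3/2})$) shows that $f_{k,C}$ vanishes to exactly first order, since $\wp''(\tfrac{\omega_k}{2})=2(e_k-e_i)(e_k-e_j)\neq0$ and $c\neq0$.

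The main obstacle is establishing this dictionary precisely: pinning down the Painlev\'e VI parameters attached to the multiple Green function, verifying that $f_{k,C}$ is indeed, up to a nowhere-vanishing holomorphic factor, the claimed single-valued symmetric function of $p_C(\tau)$, and---most delicately---proving $c\neq0$, i.e. that the branch point is simple rather than of higher order. This last point is exactly where the Painlev\'e VI equation does the work that a direct $\tau$-differentiation of $f_{k,C}$ would otherwise require. Finally I would dispatch the remaining cases uniformly: $C=\infty$ by taking the corresponding limit in \eqref{func-0}--\eqref{func-k}, and $k=0$ by repeating the local computation at the lattice point $0$, where $\wp$ has a double pole and the relevant single-valued combination is a nonzero multiple of $1/\wp(p_C)$ (equivalently $1/\lambda_C$); this again vanishes to first order at $\tau_0$, yielding the simple zero.
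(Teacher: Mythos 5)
Your proposal shares the paper's philosophy---convert zeros of $f_{k,C}$ into distinguished points of a Painlev\'e VI solution and invoke the rigidity of local expansions, instead of differentiating in $\tau$---but the two steps that carry all the weight are exactly the ones you leave open, and as stated they would not go through. First, the dictionary. You never identify which PVI solution governs the family, nor its parameters; you yourself list ``pinning down the Painlev\'e VI parameters'' and ``verifying that $f_{k,C}$ is indeed\ldots the claimed single-valued function of $p_C(\tau)$'' as obstacles. But this \emph{is} the content of the theorem. The paper's proof constructs the solutions explicitly: applying the Okamoto transformation to the known Riccati solutions of PVI$(\frac{1}{8},\frac{-1}{8},\frac{1}{8},\frac{3}{8})$ (Theorem 2.A), it produces for each $C$ a Riccati-type solution $\lambda_C^{(k)}$ of PVI$(\frac{9}{8},\frac{-1}{8},\frac{1}{8},\frac{3}{8})$ in which $f_{k,C}$ appears \emph{exactly} as the denominator of $\wp(p_C^{(k)}(\tau)|\tau)$ (Theorem \ref{thm-n=1-class}), and it then needs Lemma \ref{lem-n=1} to guarantee that numerator and denominator never vanish simultaneously, so that a zero of $f_{k,C}$ really is a pole of $\lambda_C^{(k)}$. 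Note that this dictionary differs from yours: for \emph{every} $k\in\{0,1,2,3\}$ the zeros of $f_{k,C}$ are \emph{poles} of the $k$-th solution $\lambda_C^{(k)}$, not points where one fixed solution hits $0,1,t$. Moreover, your proposed route to the dictionary---collision of nontrivial critical points of the Green function with $\frac{\omega_k}{2}$---only has meaning for the anti-holomorphic specialization $C=\bar\tau$; for a fixed complex $C$ there is no Green function whose critical points could ``collide,'' so this heuristic cannot underpin a proof valid for all $C\in\mathbb{C}\cup\{\infty\}$. What replaces it is precisely a holomorphic family of PVI solutions parametrized by $C$, which is what the Riccati construction supplies and your outline does not.

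Second, the simplicity. Your assertion that ``the Painlev\'e property pins down the local form $p_C(\tau)-\frac{\omega_k}{2}=c(\tau-\tau_0)^{1/2}+\cdots$ with $c\neq0$'' is incorrect as a matter of principle: the Painlev\'e property only excludes movable branch points and essential singularities of $\lambda(t)$; it does not force $\lambda$ to attain the values $0,1,t,\infty$ to first order. That rigidity holds only when the corresponding PVI parameter is nonzero (for poles, $\alpha\neq0$; the paper cites exactly this, and it can fail when the parameter vanishes), so it cannot even be invoked before the parameters are fixed---which you defer. In the paper this is the single analytic input: for PVI$(\frac{9}{8},\frac{-1}{8},\frac{1}{8},\frac{3}{8})$ one has $\alpha=\frac{9}{8}\neq0$, hence every pole in $\mathbb{C}\setminus\{0,1\}$ is simple; combined with $t'(\tau)\neq0$, the vanishing order of $f_{k,C}$ at $\tau_0$ equals the pole order of $\lambda_C^{(k)}$ at $t(\tau_0)$, namely one. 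So your outline points in the right direction, but the proposal is missing the explicit construction (Okamoto transformation of the Riccati solutions), the non-degeneracy lemma (the analogue of Lemma \ref{lem-n=1}), and the correct source of simplicity ($\alpha\neq0$, not the Painlev\'e property per se); these gaps are not routine to fill---they constitute the proof.
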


Theorem \ref{SZ-thm1} with $C=\infty$ can be proved by computing the
expressions of $f_{k,\infty}^{\prime}(\tau)$ directly; see Appendix
\ref{DWRT}. However, it does not seem that this direct method work for the
general case $C\not =\infty$; see also Appendix \ref{DWRT} for the reason. The
main purpose of this paper is to explain the general idea how to link the
proof of this simple zero property (\emph{without computing the derivative})
with Painlev\'{e} VI equation. In fact, the full application of this
connection is to prove the following result. Define%
\[
\phi_{\pm}(\tau):=\tau-\frac{2\pi i}{\eta_{1}(\tau)\pm \sqrt{g_{2}(\tau)/12}},
\]%
\[
\phi_{k}(\tau):=\tau-\frac{6\pi ie_{k}(\tau)}{3e_{k}(\tau)\eta_{1}(\tau
)+\frac{g_{2}(\tau)}{2}-3e_{k}(\tau)^{2}}=\tau-\frac{6\pi ie_{k}(\tau
)}{f_{k,\infty}(\tau)}.
\]
Note that $\phi_{\pm}(\tau)$ are two branches of the same $2$-valued
meromorphic function whose branch point set is%
\[
\mathfrak{S}:=\left \{  \frac{ae^{\pi i/3}+b}{ce^{\pi i/3}+d}\left \vert
\begin{pmatrix}
a & b\\
c & d
\end{pmatrix}
\in SL(2,\mathbb{Z})\right.  \right \}  ,
\]
because $g_{2}(\tau)=0$ if and only if $\tau \in \mathfrak{S}$.

\begin{theorem}
\label{SZ-thm2}Functions $\phi_{\pm}(\tau)$ are both locally one-to-one from
$\mathbb{H}\backslash \mathfrak{S}$ to $\mathbb{C}\cup \{ \infty \}$. All
$\phi_{k}(\tau)$, $k=1,2,3$, are locally one-to-one from $\mathbb{H}$ to
$\mathbb{C}\cup \{ \infty \}$.
\end{theorem}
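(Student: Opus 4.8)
The plan is to reduce the local injectivity of $\phi_k$ and $\phi_\pm$ to the simple zero property of Theorem \ref{SZ-thm1} by identifying the level sets of these functions with the zero sets of the $f_{k,C}$. First I would rewrite the defining formulas using the Legendre relation $\eta_1(\tau)\tau-\eta_2(\tau)=2\pi i$. A short computation then gives the two identities
\[
f_{k,C}(\tau)=(C-\tau)f_{k,\infty}(\tau)+6\pi i\,e_k(\tau),\quad k=1,2,3,
\]
\[
f_{0,C}(\tau)=f_{0,\infty}(\tau)(C-\tau)^2+48\pi i\,\eta_1(\tau)(C-\tau)-48\pi^2.
\]
From the first identity and the definition of $\phi_k$ one reads off the equivalence $\phi_k(\tau)=C\iff f_{k,C}(\tau)=0$; indeed substituting $C=\phi_k(\tau)$ yields $f_{k,\phi_k(\tau)}(\tau)\equiv0$. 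Solving the quadratic second identity for $w=C-\tau$ (its discriminant equals $-192\pi^2 g_2(\tau)$) shows that the two roots are exactly $C-\tau=-2\pi i/(\eta_1\pm\sqrt{g_2/12})$, so that $f_{0,C}(\tau)=0$ holds if and only if $\phi_+(\tau)=C$ or $\phi_-(\tau)=C$.

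Next I would recall that a non-constant meromorphic map into $\mathbb{C}\cup\{\infty\}$ is locally one-to-one at $\tau_0$ precisely when its local degree is $1$: at a finite value this means $\phi'(\tau_0)\neq0$, and at the value $\infty$ it means $\phi$ has a simple pole. For a finite value $C_0=\phi_k(\tau_0)$ (resp. $C_0=\phi_\pm(\tau_0)$), differentiating the identity $f_{k,C}(\tau)|_{C=\phi_k(\tau)}\equiv0$ (resp. the analogous identity for $\phi_\pm$) with respect to $\tau$ gives
\[
\phi_k'(\tau_0)=-\frac{f_{k,C_0}'(\tau_0)}{\partial_C f_{k,C}(\tau_0)|_{C=C_0}},\qquad \phi_\pm'(\tau_0)=-\frac{f_{0,C_0}'(\tau_0)}{\partial_C f_{0,C}(\tau_0)|_{C=C_0}}.
\]
Here $\partial_C f_{k,C}=f_{k,\infty}$, which is nonzero at $\tau_0$ because $\phi_k(\tau_0)$ is finite; and a direct evaluation at the two roots gives $\partial_C f_{0,C}(\tau_0)|_{C=\phi_\pm(\tau_0)}=\pm48\pi i\sqrt{g_2(\tau_0)/12}$, which is nonzero exactly because $\tau_0\in\mathbb{H}\setminus\mathfrak{S}$. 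Since $f_{k,C_0}(\tau_0)=0$ (resp. $f_{0,C_0}(\tau_0)=0$), Theorem \ref{SZ-thm1} forces the numerators $f_{k,C_0}'(\tau_0)$ and $f_{0,C_0}'(\tau_0)$ to be nonzero, whence $\phi_k'(\tau_0)\neq0$ and $\phi_\pm'(\tau_0)\neq0$.

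It remains to treat the poles, which I expect to be the most delicate step. A pole of $\phi_k$ occurs exactly at a zero $\tau_0$ of $f_{k,\infty}$; at such a point $e_k(\tau_0)\neq0$, for otherwise $f_{k,\infty}(\tau_0)=g_2(\tau_0)/2$ would force $g_2(\tau_0)=0$, and $e_k(\tau_0)=g_2(\tau_0)=0$ is impossible since it would make the distinct values $e_1,e_2,e_3$ all coincide. Since Theorem \ref{SZ-thm1} with $C=\infty$ says $f_{k,\infty}$ has only simple zeros, $\phi_k$ has a simple pole at $\tau_0$. For $\phi_\pm$ I would work on a neighborhood in $\mathbb{H}\setminus\mathfrak{S}$ on which a holomorphic branch of $\sqrt{g_2/12}$ is fixed; a pole of $\phi_+$ (resp. $\phi_-$) sits at a zero of $\eta_1+\sqrt{g_2/12}$ (resp. $\eta_1-\sqrt{g_2/12}$), and since $f_{0,\infty}=12(\eta_1-\sqrt{g_2/12})(\eta_1+\sqrt{g_2/12})$ with the two factors not vanishing together on $\mathbb{H}\setminus\mathfrak{S}$, the simple zeros of $f_{0,\infty}$ from Theorem \ref{SZ-thm1} translate into simple zeros of the relevant factor, hence into simple poles of $\phi_\pm$. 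This establishes local injectivity at every point; the main work lies in the pole bookkeeping and in verifying that the denominators above vanish only on $\mathfrak{S}$.
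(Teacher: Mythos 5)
Your proposal is correct and follows essentially the same route as the paper's own proof (Theorem \ref{local-1-1}): identify the level set $\{\phi=C\}$ with the zero set of $f_{k,C}$ via the Legendre relation, invoke Theorem \ref{SZ-thm1} to force the $\tau$-derivative to be nonzero at such a zero, and treat poles as simple poles arising from simple zeros of $f_{k,\infty}$ (resp.\ of the factor $\eta_1\pm\sqrt{g_2/12}$). Your implicit-differentiation bookkeeping $\phi'=-\partial_\tau f/\partial_C f$, with $\partial_C f_{0,C}=\pm 48\pi i\sqrt{g_2/12}$ at the two roots, is just a tidier packaging of the paper's direct computation with the factor $\psi(\tau)=C\eta_1(\tau)-\eta_2(\tau)+\sqrt{g_2/12}\,(C-\tau)$.
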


The reason we are interested in those functions comes from the following
multiple Green function $G_{2}(z_{1},z_{2}|\tau)$ defined on $(E_{\tau
}\backslash \{0\})^{2}\backslash \{(z_{1},z_{2})|z_{1}=z_{2}\}:$
\begin{equation}
G_{2}(z_{1},z_{2}|\tau):=G(z_{1}-z_{2}|\tau)-2G(z_{1}|\tau)-2G(z_{2}|\tau).
\label{511}%
\end{equation}
Here $E_{\tau}:=\mathbb{C}/\Lambda_{\tau}$ is the flat torus and $G(z|\tau)$
is the Green function on the torus $E_{\tau}$:%
\begin{equation}
\left \{
\begin{array}
[c]{l}%
-\Delta G(z|\tau)=\delta_{0}(z)-\frac{1}{\left \vert E_{\tau}\right \vert
}\text{ \ in }E_{\tau},\\
\int_{E_{\tau}}G(z|\tau)dz\wedge d\bar{z}=0,
\end{array}
\right.  \label{510}%
\end{equation}
where $\delta_{0}$ is the Dirac measure at $0$ and $\left \vert E_{\tau
}\right \vert $ is the area of the torus $E_{\tau}$. Remark that $G(\cdot
|\tau)$ is even and doubly periodic, so $\frac{\omega_{k}}{2}$, $k=1,2,3$, are
always critical points of $G(\cdot|\tau)$ and other critical points must
appear in pairs. In \cite{LW} Wang and the third author proved that
$G(\cdot|\tau)$ has either three or five critical points (depending on $\tau$).

Our motivation of studying $G_{2}$ comes from \cite{CLW,LW2}, where general multiple Green function $G_{n}$ was investigated due to its fundamental importance in the PDE theory of mean field equations. See \cite{CLW,LW2} for details.
A critical point $(a_{1},a_{2})$ of $G_{2}$ satisfies%
\begin{equation}
2\nabla G(a_{1}|\tau)=\nabla G(a_{1}-a_{2}|\tau),\text{ \ }2\nabla
G(a_{2}|\tau)=\nabla G(a_{2}-a_{1}|\tau). \label{SZ-15}%
\end{equation}
Clearly if $(a_{1},a_{2})$ is a critical point of $G_{2}$ then so does
$(a_{2},a_{1})$. Of course, we consider such two critical points to be
\emph{the same one}. We want to estimate the number of critical points of
$G_{2}$.\medskip

\noindent \textbf{Conjecture}. \textit{The number of critical points of}
$G_{2}(\cdot,\cdot|\tau)\leq9$. \textit{More precisely, the number must be one
of }$\{5,7,9\}$\textit{ depending on the moduli parameter} $\tau$.\medskip

This conjecture is still not settled yet. A critical point $(a_{1},a_{2})$ is
called a \emph{trivial critical point }if%
\[
\{a_{1},a_{2}\}=\{-a_{1},-a_{2}\} \text{ \ in \ }E_{\tau}\text{.}%
\]
It is known \cite{LW2} that $G_{2}$ has only five trivial critical points
$\{(\frac{1}{2}\omega_{i},\frac{1}{2}\omega_{j})$ $|$ $i\not =j\}$ and
$\{(q_{\pm},-q_{\pm})$ $|$ $\wp(q_{\pm})=\pm \sqrt{g_{2}/12}\}$; see Appendix
\ref{TCPGF} for a proof. Let $\{i,j,k\}=\{1,2,3\}$. In Appendix \ref{TCPGF}, we will also show that the
Hessian of $G_{2}$ at each trivial critical point is given by (the proof is borrowed from \cite{LW2})%
\begin{equation}
\det D^{2}G_{2}(\tfrac{1}{2}\omega_{i},\tfrac{1}{2}\omega_{j}|\tau
)=\frac{4|f_{k,\infty}(\tau)|^{2}}{(2\pi)^{4}\operatorname{Im}\tau
}\operatorname{Im}\phi_{k}(\tau), \label{SZ-1}%
\end{equation}%
\begin{equation}
\det D^{2}G_{2}(q_{\pm},-q_{\pm}|\tau)=\frac{3|g_{2}(\tau)|}{4\pi
^{4}\operatorname{Im}\tau}|\wp(q_{\pm})+\eta_{1}|^{2}\operatorname{Im}%
\phi_{\pm}(\tau). \label{SZ-2}%
\end{equation}
The computation of (\ref{SZ-1})-(\ref{SZ-2}), straightforward but not easy,
has nothing related to our main ideas. For the readers' convenience, we put it
in Appendix \ref{TCPGF}. In view of geometry, we want to determine those
$\tau$ such that one of trivial critical points is \emph{degenerate}, because bifurcation phenomena should happen and so
\emph{non-trivial} critical points of $G_{2}$ should appear near such $\tau$.
Define sets%
\[
C_{i,j}:=\left \{  \tau|\det D^{2}G_{2}(\tfrac{1}{2}\omega_{i},\tfrac{1}%
{2}\omega_{j}|\tau)=0\right \}  ,
\]%
\[
C_{\pm}:=\left \{  \tau|\det D^{2}G_{2}(q_{\pm},-q_{\pm}|\tau)=0\right \}  ,
\]
and curves%
\[
\tilde{C}_{\pm}:=\left \{  \tau \left \vert |\wp(q_{\pm})+\eta_{1}|^{2}%
\operatorname{Im}\phi_{\pm}(\tau)=0\right.  \right \}  .
\]
Then Theorem \ref{SZ-thm2} has the following application.

\begin{theorem}
[=Theorem \ref{SZ-thm3-copy}]\label{SZ-thm3}Recall $C_{i,j}$, $C_{\pm}$ and
$\tilde{C}_{\pm}$ defined in (\ref{III-41})-(\ref{III-44}).

\begin{itemize}
\item[(i)] For $\{i,j,k\}=\{1,2,3\}$, the degeneracy curve $C_{i,j}$ is smooth.

\item[(ii)] the set $C_{\pm}$ is a disjoint union of $\tilde{C}_{\pm}$ and
$\mathfrak{S}=\{ \tau|g_{2}(\tau)=0\}$, i.e. $C_{\pm}=\tilde{C}_{\pm}%
\sqcup \mathfrak{S}$. Furthermore, the degeneracy curve $\tilde{C}_{\pm}$ is smooth.
\end{itemize}
\end{theorem}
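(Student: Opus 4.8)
The plan is to exhibit each degeneracy curve as a preimage $\phi^{-1}(\mathbb{R}\cup\{\infty\})$ of the ``real circle'' $\mathbb{R}\cup\{\infty\}$ inside the Riemann sphere $\mathbb{P}^{1}=\mathbb{C}\cup\{\infty\}$, where $\phi$ is one of the locally one-to-one maps provided by Theorem \ref{SZ-thm2}, and then to apply the elementary fact that the preimage of a smooth real $1$-submanifold under a local biholomorphism is smooth. A meromorphic map that is locally one-to-one is locally biholomorphic onto its image in $\mathbb{P}^{1}$ (at a pole one reads this off in the chart $1/w$), hence a local diffeomorphism of real surfaces; since $\mathbb{R}\cup\{\infty\}$ is a smooth circle in $\mathbb{P}^{1}$, its preimage is a smooth curve. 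The whole argument thus reduces to rewriting the zero loci of (\ref{SZ-1})--(\ref{SZ-2}) in this form.

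For (i), since $4/((2\pi)^{4}\operatorname{Im}\tau)>0$, formula (\ref{SZ-1}) vanishes iff $|f_{k,\infty}|^{2}\operatorname{Im}\phi_{k}=0$. Where $f_{k,\infty}(\tau)\neq0$ this happens exactly when $\operatorname{Im}\phi_{k}(\tau)=0$. Where $f_{k,\infty}(\tau)=0$ I would check $e_{k}(\tau)\neq0$: otherwise $f_{k,\infty}=g_{2}/2$ forces $g_{2}(\tau)=0$, while $g_{2}=0$ on $\mathbb{H}$ forces every $e_{k}\neq0$ (if some $e_{k}=0$ then, from $\sum e_{k}=0$ and $\sum_{i<j}e_{i}e_{j}=-g_{2}/4=0$, all $e_{k}=0$, whence $g_{3}=0$, contradicting $\Delta\neq0$). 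Hence at a zero of $f_{k,\infty}$ one has $\phi_{k}=\tau-6\pi i e_{k}/f_{k,\infty}=\infty$, and conversely; therefore $C_{i,j}=\phi_{k}^{-1}(\mathbb{R}\cup\{\infty\})$ on all of $\mathbb{H}$. Local injectivity of $\phi_{k}$ (Theorem \ref{SZ-thm2}) forces simple poles—consistent with the simple-zero property of $f_{k,\infty}$ in Theorem \ref{SZ-thm1}—so the preimage principle yields the smoothness of $C_{i,j}$.

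For (ii), the starting identity is $\wp(q_{\pm})+\eta_{1}=\eta_{1}\pm\sqrt{g_{2}/12}$, immediate from $\wp(q_{\pm})=\pm\sqrt{g_{2}/12}$; this is precisely the denominator in $\phi_{\pm}=\tau-2\pi i/(\eta_{1}\pm\sqrt{g_{2}/12})$, so $\wp(q_{\pm})+\eta_{1}=0$ is exactly the locus $\phi_{\pm}=\infty$. Arguing as in (i), the factor $|\wp(q_{\pm})+\eta_{1}|^{2}\operatorname{Im}\phi_{\pm}$ in (\ref{SZ-2}) vanishes iff $\phi_{\pm}(\tau)\in\mathbb{R}\cup\{\infty\}$, while the factor $|g_{2}|$ vanishes exactly on $\mathfrak{S}$. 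Thus $C_{\pm}=\mathfrak{S}\cup\tilde{C}_{\pm}$ with $\tilde{C}_{\pm}=\phi_{\pm}^{-1}(\mathbb{R}\cup\{\infty\})$, and on $\mathbb{H}\backslash\mathfrak{S}$ the map $\phi_{\pm}$ is locally one-to-one, so the preimage principle again gives the smoothness of $\tilde{C}_{\pm}$ away from the branch set. It remains to prove $\mathfrak{S}\cap\tilde{C}_{\pm}=\varnothing$, which both yields the disjoint union and guarantees that $\tilde{C}_{\pm}$ avoids the branch points where $\phi_{\pm}$ is not locally biholomorphic.

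To settle disjointness I would use the $SL(2,\mathbb{Z})$-equivariance $\phi_{\pm}(\gamma\tau)=\gamma\cdot\phi_{\pm}(\tau)$, verifiable from $g_{2}(\tau+1)=g_{2}(\tau)$, $\eta_{1}(\tau+1)=\eta_{1}(\tau)$, the weight-$4$ law $g_{2}(-1/\tau)=\tau^{4}g_{2}(\tau)$, the quasi-modular law $\eta_{1}(-1/\tau)=\tau^{2}\eta_{1}(\tau)-2\pi i\tau$, and Legendre's relation $\eta_{1}\tau-\eta_{2}=2\pi i$. Since $\mathfrak{S}=SL(2,\mathbb{Z})\cdot e^{\pi i/3}$ and every element of $SL(2,\mathbb{R})$ preserves the open lower half-plane, it suffices to check that $\phi_{\pm}(e^{\pi i/3})$ lies there. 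Using $g_{2}(e^{\pi i/3})=0$ and $\eta_{1}(e^{\pi i/3})=2\pi/\sqrt{3}$ (via $\eta_{1}=\tfrac{\pi^{2}}{3}E_{2}$ and the vanishing of the weight-$2$ form $E_{2}-\tfrac{3}{\pi\operatorname{Im}\tau}$ at the order-$3$ elliptic fixed point), one gets $\phi_{\pm}(e^{\pi i/3})=e^{\pi i/3}-i\sqrt{3}$, with imaginary part $-\tfrac{\sqrt{3}}{2}<0$. Hence $\phi_{\pm}(\mathfrak{S})$ lies in the open lower half-plane, so on $\mathfrak{S}$ both $\operatorname{Im}\phi_{\pm}\neq0$ and $\phi_{\pm}\neq\infty$ (the latter forcing $\wp(q_{\pm})+\eta_{1}=\eta_{1}\neq0$), giving $\mathfrak{S}\cap\tilde{C}_{\pm}=\varnothing$. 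I expect this last step—the equivariance of $\phi_{\pm}$ together with the special value of $\eta_{1}$ at the equianharmonic point—to be the main obstacle; granting it, every smoothness assertion follows formally from the preimage argument built on Theorem \ref{SZ-thm2}.
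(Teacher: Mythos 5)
Your proof is correct, and it rests on the same two pillars as the paper's own argument: the local injectivity statement of Theorem \ref{SZ-thm2} (in its sharper form, Theorem \ref{local-1-1}), and the fact that on $\mathfrak{S}$ the common value $\phi_{\pm}=\eta_{2}/\eta_{1}$ is neither real nor $\infty$. Where you genuinely diverge is in how smoothness is extracted. The paper splits each curve into two loci and runs the real implicit function theorem separately: at points where the relevant holomorphic function vanishes ($f_{k,\infty}$ for $C_{i,j}$, resp.\ $\varphi=\wp(q_{\pm})+\eta_{1}$ for $\tilde{C}_{\pm}$) it differentiates the Hessian formulas (\ref{SZ-6}) and (\ref{III-46}) by hand and uses the simple-zero property to get $\nabla H\neq 0$, while at the remaining points it uses $\operatorname{Im}\phi$ as the defining function, whose gradient $(\operatorname{Im}\phi^{\prime},\operatorname{Re}\phi^{\prime})$ is nonzero. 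Your single identification $C_{i,j}=\phi_{k}^{-1}(\mathbb{R}\cup\{\infty\})$ and $\tilde{C}_{\pm}=\phi_{\pm}^{-1}(\mathbb{R}\cup\{\infty\})$ --- where your check that $e_{k}$ and $f_{k,\infty}$ cannot vanish simultaneously is exactly what makes the identification exact, and it is right --- merges these two cases into one: a locally injective meromorphic map is an immersion into $\mathbb{P}^{1}$, so the preimage of the smooth circle $\mathbb{R}\cup\{\infty\}$ is automatically a smooth curve, and the paper's pole-locus computation is precisely your chart-at-infinity argument written out explicitly. What the paper's route buys is elementarity and explicit gradient formulas; what yours buys is brevity and a uniform treatment of the pole and finite loci. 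For the disjointness $\tilde{C}_{\pm}\cap\mathfrak{S}=\emptyset$, the paper had already recorded the needed fact as (\ref{III-40}), proved by transporting the identity $\eta_{1}(\rho)=\rho\,\eta_{2}(\rho)$ (a consequence of $\wp(z|\rho)=\rho^{2}\wp(\rho z|\rho)$, $\rho=e^{\pi i/3}$) around $\mathfrak{S}$ via the $SL(2,\mathbb{Z})$ transformation law of $(\eta_{2},\eta_{1})$; your rederivation --- $SL(2,\mathbb{Z})$-equivariance of $\phi_{\pm}$ together with the value $\eta_{1}(e^{\pi i/3})=2\pi/\sqrt{3}$ obtained from the vanishing of the weight-two form $E_{2}-\tfrac{3}{\pi\operatorname{Im}\tau}$ at the elliptic fixed point --- is the same transport argument with a heavier (but correct) evaluation at $\rho$, and indeed both computations give $\phi_{\pm}(e^{\pi i/3})=e^{-\pi i/3}$, which lies in the lower half-plane. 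You were also right to flag that this disjointness is what keeps $\tilde{C}_{\pm}$ away from the branch points of $\phi_{\pm}$, a point the paper handles identically via (\ref{III-40}).
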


The numerical computation for those smooth curves is shown in Figure \ref{deg-curve-2}, which is borrowed from \cite{LW2}. Figure \ref{deg-curve-2} indicates some interesting problems for us, for example: (i) Why is each component of these degeneracy curves unbounded? (ii) Why do these degeneracy curves consist of nine smooth connected curves? We will turn back to these problems in a future work, where we will also continue our study of those functions $f_{k, C}(\tau)$. For example, we will prove $f_{0,\infty}(\tau)\neq 0$ whenever $\operatorname{Im}\tau\geq \frac{1}{2}$.

\begin{figure} \label{deg-curve-2}
\includegraphics[width=2.8in]{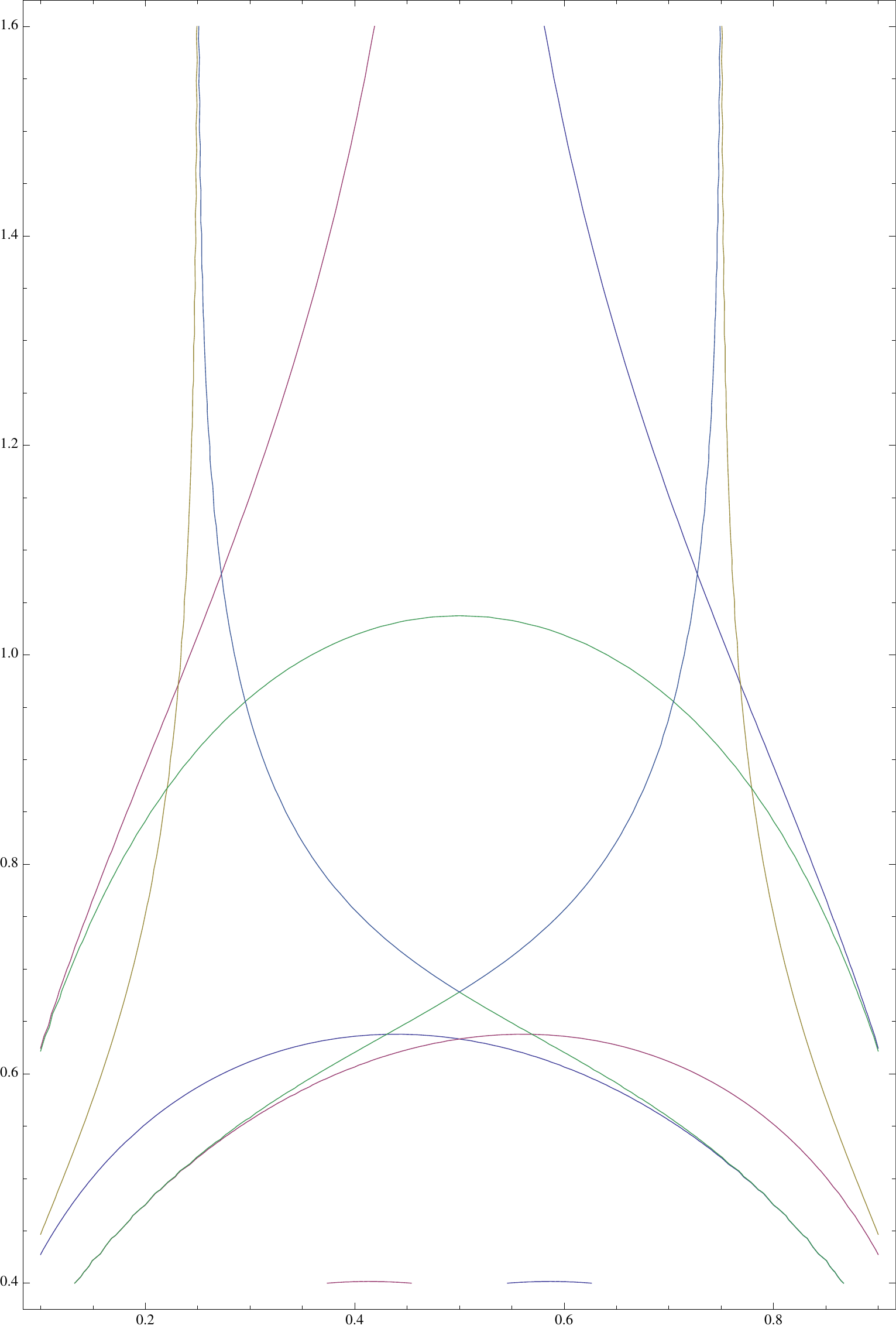}
\caption{Five degeneracy curves $C_{\pm}$ and $\tilde{C}_{i,j}$ (marked in 5 different colors).}
\end{figure}

In Section \ref{RTS-PVI}, we will review some facts about solutions of
Painlev\'{e} VI equation. Among them, there are solutions which can be reduced
to solve first order differential equations, the so-called Riccati type
equations. Riccati type equations were already well studied by Poincare in the
19th century. We could write down the explicit expression for solutions of
those Riccati type equations related to certain Painlev\'{e} VI equation. In
Section \ref{SZPA}, we will prove Theorems \ref{SZ-thm1}-\ref{SZ-thm3} by
applying the theory of Painlev\'{e} VI equation established in Section
\ref{RTS-PVI}.

\medskip

\noindent{\bf Acknowledgements} The authors thank Chin-Lung Wang very much for providing the file of Figure \ref{deg-curve-2} to us.

\section{Riccati type solutions of Painleve VI equation}

\label{RTS-PVI}

The well-known Painlev\'{e} VI equation with four free parameters
$(\alpha,\beta,\gamma,\delta)$ (PVI$(\alpha,\beta,\gamma,\delta)$) is written
as%
\begin{align}
\frac{d^{2}\lambda}{dt^{2}}=  &  \frac{1}{2}\left(  \frac{1}{\lambda}+\frac
{1}{\lambda-1}+\frac{1}{\lambda-t}\right)  \left(  \frac{d\lambda}{dt}\right)
^{2}-\left(  \frac{1}{t}+\frac{1}{t-1}+\frac{1}{\lambda-t}\right)
\frac{d\lambda}{dt}\nonumber \\
&  +\frac{\lambda(\lambda-1)(\lambda-t)}{t^{2}(t-1)^{2}}\left[  \alpha
+\beta \frac{t}{\lambda^{2}}+\gamma \frac{t-1}{(\lambda-1)^{2}}+\delta
\frac{t(t-1)}{(\lambda-t)^{2}}\right]  . \label{46-0}%
\end{align}
Due to its connection with many different disciplines in mathematics and
physics, PVI (\ref{46-0}) has been extensively studied in the past several
decades. See
\cite{Dubrovin-Mazzocco,GR,Hit1,GP,Lisovyy-Tykhyy,Y.Manin,Okamoto1,Watanabe}
and references therein.

One of the fundamental properties for PVI (\ref{46-0}) is the so-called
\emph{Painlev\'{e} property} which says that any solution $\lambda(t)$ of
(\ref{46-0}) has neither movable branch points nor movable essential
singularities; in other words, for any $t_{0}\in \mathbb{C}\backslash \{0,1\}$,
either $\lambda(t)$ is smooth at $t_{0}$ or $\lambda(t)$ has a pole at $t_{0}%
$. Furthermore, $\lambda(t)$ has at most \emph{simple poles} in $\mathbb{C}\backslash \{0,1\}$ if $\alpha
\not =0$ (cf. \cite[Proposition 1.4.1]{GP} or \cite[Theorem 3.A]{CKL}).

By the Painlev\'{e} property, it is reasonable to lift PVI (\ref{46-0}) to the
covering space $\mathbb{H=}\{ \tau|\operatorname{Im}\tau>0\}$ of
$\mathbb{C}\backslash \{0,1\}$ by the following transformation:%
\begin{equation}
t=\frac{e_{3}(\tau)-e_{1}(\tau)}{e_{2}(\tau)-e_{1}(\tau)},\text{ \ }%
\lambda(t)=\frac{\wp(p(\tau)|\tau)-e_{1}(\tau)}{e_{2}(\tau)-e_{1}(\tau)}.
\label{II-130}%
\end{equation}
Then $p(\tau)$ satisfies the following \emph{elliptic form} of PVI (cf.
\cite{Babich-Bordag,Y.Manin,Painleve}):
\begin{equation}
\frac{d^{2}p(\tau)}{d\tau^{2}}=\frac{-1}{4\pi^{2}}\sum_{k=0}^{3}\alpha_{k}%
\wp^{\prime}\left(  \left.  p(\tau)+\frac{\omega_{k}}{2}\right \vert
\tau \right)  , \label{124-0}%
\end{equation}
with parameters given by%
\begin{equation}
\left(  \alpha_{0},\alpha_{1},\alpha_{2},\alpha_{3}\right)  =\left(
\alpha,-\beta,\gamma,\tfrac{1}{2}-\delta \right)  . \label{126-0}%
\end{equation}
The Painlev\'{e} property of PVI (\ref{46-0}) implies that function
$\wp(p(\tau)|\tau)$ is a single-valued meromorphic function in $\mathbb{H}$.

Generically, solutions of PVI (\ref{46-0}) are transcendental (cf.
\cite{Watanabe}). However, there are classical solutions such as algebraic
solutions (cf. \cite{Dubrovin-Mazzocco,Lisovyy-Tykhyy}) or Riccati solutions (cf.
\cite{Watanabe}) for PVI with certain parameters. According to \cite{Watanabe}%
, when $\alpha_{k}=\tfrac{1}{2}(n_{k}+\tfrac{1}{2})^{2}$ with $n_{k}%
\in \mathbb{N\cup \{}0\mathbb{\}}$ for all $k$, PVI (\ref{46-0}) possesses four
$1$-parameter families of solutions which satisfy four Riccati type equations.
See also \cite{Mazzocco}. In this paper, we only consider the case $\alpha
_{0}=\frac{9}{8}$ and $\alpha_{k}=\frac{1}{8}$ for $k\geq1$, i.e.
PVI$(\tfrac{9}{8},\tfrac{-1}{8},\tfrac{1}{8},\tfrac{3}{8})$. We will prove
that the corresponding four Riccati type equations of PVI$(\tfrac{9}{8}%
,\tfrac{-1}{8},\tfrac{1}{8},\tfrac{3}{8})$ are%
\begin{equation}
P_{0}\left(  \tfrac{d}{dt}\lambda^{(0)}(t),\lambda^{(0)}(t),t\right)  =0,
\label{III-19}%
\end{equation}%
\begin{equation}
\frac{d\lambda^{(1)}}{dt}=\frac{3(\lambda^{(1)})^{2}-2\lambda^{(1)}%
-t}{2t(t-1)}, \label{III-20}%
\end{equation}%
\begin{equation}
\frac{d\lambda^{(2)}}{dt}=\frac{3(\lambda^{(2)})^{2}-4\lambda^{(2)}%
+t}{2t(t-1)}, \label{III-20-1}%
\end{equation}%
\begin{equation}
\frac{d\lambda^{(3)}}{dt}=\frac{3(\lambda^{(3)})^{2}-2(t+1)\lambda^{(3)}%
+t}{2t(t-1)}, \label{III-20-2}%
\end{equation}
where{\allowdisplaybreaks%
\begin{align}
P_{0}(y,x,t)  &  =8t^{3}(t-1)^{3}y^{3}-4t^{2}(t-1)^{2}(3x^{2}+2(t-2)x-t)y^{2}%
\nonumber \\
&  -2t(t-1)(x^{2}-2tx+t)(9x^{2}-2(t+4)x+t)y\label{poly-n=1}\\
&  +27x^{6}-6(7t+10)x^{5}+(4t^{2}+101t+32)x^{4}\nonumber \\
&  +4t(2t^{2}-5t-14)x^{3}-3t^{2}(4t-3)x^{2}+2t^{2}(3t+2)x-t^{3}.\nonumber
\end{align}}
\begin{remark}
Notice that equation (\ref{III-19}) is cubic in $\tfrac{d}{dt}\lambda
^{(0)}(t)$ and hence not precisely a Riccati equation. Here we call it a
Riccati type equation because it will be obtained from a Riccati equation by
applying the Okamoto transformation.
\end{remark}

The main result of this section is

\begin{theorem}
\label{thm-n=1-class}For any $k\in \{0,1,2,3\}$ and $C\in \mathbb{C}\cup \{
\infty \}$,%
\[
\lambda_{C}^{(k)}(t)=\frac{\wp(p_{C}^{(k)}(\tau)|\tau)-e_{1}(\tau)}{e_{2}%
(\tau)-e_{1}(\tau)}%
\]
is a solution of the $k$-th Riccati type equation in (\ref{III-19}%
)-(\ref{III-20-2}), where $\wp(p_{C}^{(k)}(\tau)|\tau)$ are given
by:
\begin{align}
&  \wp(p_{C}^{(0)}(\tau)|\tau)\label{III-17}\\
=  &  \frac{-4(C\eta_{1}-\eta_{2})^{3}-g_{2}(C\eta_{1}-\eta_{2})(C-\tau
)^{2}+2g_{3}(C-\tau)^{3}}{(C-\tau)[12(C\eta_{1}-\eta_{2})^{2}-g_{2}%
(C-\tau)^{2}]},\nonumber
\end{align}
and%
\begin{equation}
\wp(p_{C}^{(k)}(\tau)|\tau)=\frac{(\frac{g_{2}}{2}-3e_{k}^{2})(C\eta_{1}%
-\eta_{2})+\frac{g_{2}}{4}e_{k}(C-\tau)}{3e_{k}(C\eta_{1}-\eta_{2}%
)+(\frac{g_{2}}{2}-3e_{k}^{2})(C-\tau)} \label{III-18}%
\end{equation}
for $k\in \{1,2,3\}$.

Furthermore, such $\lambda_{C}^{(k)}(t)$ gives all the Riccati type solutions
of PVI$(\frac{9}{8},\frac{-1}{8},\frac{1}{8},\frac{3}{8})$.
\end{theorem}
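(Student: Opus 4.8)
The plan is to combine Watanabe's classification of Riccati solutions with the Okamoto (B\"acklund) transformation and the uniformization (\ref{II-130}), while exploiting the $S_3$ symmetry permuting $e_1,e_2,e_3$ to cut down the casework. First I would read off the parameters: the condition $\alpha_k=\tfrac12(n_k+\tfrac12)^2$ forces $(n_0,n_1,n_2,n_3)=(1,0,0,0)$, so setting $\theta_k:=n_k+\tfrac12$ gives $(\theta_0,\theta_1,\theta_2,\theta_3)=(\tfrac32,\tfrac12,\tfrac12,\tfrac12)$. By Watanabe's theorem the Riccati solutions occur exactly for those sign patterns $\varepsilon=(\varepsilon_k)\in\{\pm1\}^4$ with $\sum_k\varepsilon_k\theta_k$ an odd integer; up to the global sign these are $(+,+,+,+)\mapsto 3$ together with the three patterns of type $(+,+,-,-)\mapsto 1$, so there are \emph{exactly four} families. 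Since $\alpha_1=\alpha_2=\alpha_3=\tfrac18$, the elliptic form (\ref{124-0}) is invariant under the $S_3$ action permuting $e_1,e_2,e_3$ (equivalently the anharmonic action $t\mapsto 1-t,\tfrac1t,\dots$ through (\ref{II-130})); this $S_3$ permutes the three families with $\sum\varepsilon_k\theta_k=1$ and fixes the $\sum=3$ one. Hence the former are the $k=1,2,3$ families and the latter is $k=0$, which fixes the indexing and reduces the $k=1,2,3$ analysis to a single representative case.

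Next I would derive the Riccati equations. For the $\sum\varepsilon_k\theta_k=1$ sector I would write the standard PVI Riccati ansatz $t(t-1)\tfrac{d\lambda}{dt}=A\lambda^2+B(t)\lambda+D(t)$, whose coefficients are determined by the $\theta_k$ (with $A=\theta_0=\tfrac32$) and are affine in $t$; specializing and then applying the $S_3$ permutation produces exactly (\ref{III-20})--(\ref{III-20-2}). It remains to check that any solution of such a first-order equation solves PVI$(\tfrac98,\tfrac{-1}{8},\tfrac18,\tfrac38)$, which is a finite algebraic identity obtained by differentiating the Riccati relation once and substituting. The $k=0$ family is the delicate one, and here I would follow the Remark: an Okamoto transformation carries a genuine (quadratic) Riccati solution of an Okamoto-shifted PVI onto the $k=0$ solutions of PVI$(\tfrac98,\dots)$. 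Since this transformation is birational in $(\lambda,\tfrac{d\lambda}{dt})$, substituting the quadratic seed relation and eliminating the seed variable yields a polynomial relation between $\lambda^{(0)}$ and $\tfrac{d}{dt}\lambda^{(0)}$; matching degrees should give the cubic equation (\ref{III-19}) with $P_0$ as in (\ref{poly-n=1}).

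To exhibit the explicit solutions I would integrate each equation. The classical route linearizes, e.g. for (\ref{III-20}) via $\lambda=-\tfrac{2t(t-1)}{3}\tfrac{u'}{u}$, into a Gauss hypergeometric equation whose solutions form a $2$-dimensional space; the Riccati solutions are then parametrized by the ratio $C\in\mathbb{C}\cup\{\infty\}$ of two independent solutions, matching the parameter range in the statement, and pushing this back through (\ref{II-130}) yields $\wp(p_C^{(k)}(\tau)|\tau)$ as a rational function of the period data. In practice I would instead \emph{verify} the closed forms (\ref{III-17})--(\ref{III-18}) directly: put $\lambda_C^{(k)}=\tfrac{\wp(p_C^{(k)})-e_1}{e_2-e_1}$, compute $\tfrac{d\lambda_C^{(k)}}{dt}=\big(\tfrac{d}{d\tau}\lambda_C^{(k)}\big)\big/\big(\tfrac{d}{d\tau}t\big)$ using the standard $\tau$-derivatives of $\eta_1,\eta_2,e_k,g_2,g_3$ and of $\wp(\cdot|\tau)$, and reduce each Riccati relation to an identity. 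Writing $X:=C\eta_1-\eta_2$ and $Y:=C-\tau$, Legendre's relation $\eta_1\tau-\eta_2=2\pi i$ together with these derivative formulas gives $\tfrac{dX}{d\tau}$ and $\tfrac{dY}{d\tau}$ in closed form, so the verification is finite; along the way one sees that the denominators in (\ref{III-17})--(\ref{III-18}) are precisely $f_{0,C}$ and $f_{k,C}$, which is the bridge to Theorem \ref{SZ-thm1}. For $k=1,2,3$ I would verify only $k=1$ and transport (\ref{III-18}) to $k=2,3$ by the $S_3$ symmetry.

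Finally, completeness is immediate: Watanabe's theorem yields exactly four one-parameter Riccati families, and by uniqueness for first-order ODEs the solution set of each of (\ref{III-19})--(\ref{III-20-2}) is precisely the one-parameter family swept out by $C$, so the $\lambda_C^{(k)}$ exhaust all Riccati type solutions of PVI$(\tfrac98,\tfrac{-1}{8},\tfrac18,\tfrac38)$. I expect the main obstacle to be the $k=0$ case: implementing the Okamoto transformation and the elimination so as to land on the \emph{precise} cubic $P_0$, and then carrying out the degree-three verification of (\ref{III-17}), is substantially heavier than the $k=1,2,3$ computations, which the $S_3$ symmetry and the quadratic structure keep routine.
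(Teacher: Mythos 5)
Your plan diverges from the paper's proof in a way that is mostly legitimate, but the final completeness step contains a genuine gap. The paper never argues at the level of PVI$(\frac{9}{8},\frac{-1}{8},\frac{1}{8},\frac{3}{8})$ directly: it takes the \emph{complete} list of Riccati solutions of the seed equation PVI$(\frac{1}{8},\frac{-1}{8},\frac{1}{8},\frac{3}{8})$ (Theorem 2.A, i.e.\ formulas (\ref{III-13})--(\ref{III-14}) from \cite{CKL1}), pushes each seed family through the explicit Okamoto transformation (\ref{III-23})--(\ref{III-24}), and thereby obtains the equations (\ref{III-19})--(\ref{III-20-2}), the closed forms (\ref{III-17})--(\ref{III-18}), and completeness all at once, the last by invertibility of the Okamoto map. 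You instead invoke Watanabe's count at the target level and then claim that ``by uniqueness for first-order ODEs the solution set of each of (\ref{III-19})--(\ref{III-20-2}) is precisely the one-parameter family swept out by $C$.'' This is fine for the genuine Riccati equations (\ref{III-20})--(\ref{III-20-2}), provided you also check that $C\mapsto\lambda_C^{(k)}(t_0)$ is a nondegenerate M\"obius map (this is exactly the nonvanishing of the determinant computed in the proof of Lemma \ref{lem-n=1}(iii)--(iv)), so that every initial value is realized. But it \emph{fails} for (\ref{III-19}): $P_0$ is cubic in $\frac{d}{dt}\lambda^{(0)}$, so through a given point $(t_0,\lambda_0)$ the equation prescribes up to three slopes, uniqueness does not hold, and the solution set may in addition contain singular (envelope) solutions. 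Hence nothing you wrote shows that $\{\lambda_C^{(0)}\}_C$ exhausts the solutions of (\ref{III-19}), nor that it exhausts Watanabe's $\sum=3$ family. To close this you must do what the paper does: the $k=0$ family is \emph{by construction} the image under the birational (hence invertible) Okamoto transformation of the seed Riccati family (\ref{1000}), and the seed family is completely parametrized by $C$ (by your hypergeometric linearization, or by Theorem 2.A); invertibility then transfers completeness, with no uniqueness statement about the cubic equation needed.

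A second, smaller gap sits in your $k=0$ elimination. Substituting (\ref{III-25-1}) into the Okamoto relation does not yield $P_0=0$ alone; as in the paper one gets the product
\[
P_{0}(\lambda^{\prime},\lambda,t)\cdot\left[  2t(t-1)\lambda^{\prime}
+\lambda^{2}-2t\lambda+t\right]  =0,
\]
and you must rule out the second factor: a solution annihilating it would satisfy the seed Riccati equation (\ref{1000}) and hence PVI$(\frac{1}{8},\frac{-1}{8},\frac{1}{8},\frac{3}{8})$ instead of PVI$(\frac{9}{8},\frac{-1}{8},\frac{1}{8},\frac{3}{8})$, a contradiction. ``Matching degrees'' alone does not select the correct factor. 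Apart from these two points, your route for $k=1,2,3$ (Riccati ansatz with $A=\theta_\infty=\frac32$, the $S_3$ symmetry permuting $e_1,e_2,e_3$, and direct verification of (\ref{III-18}) by $\tau$-differentiation using (\ref{B-0})--(\ref{B-2}) and the Legendre relation) is a genuinely different and self-contained alternative to the paper's transport argument; its cost is that the heavy closed-form verifications, which the paper gets for free by substituting (\ref{III-13})--(\ref{III-14}) into (\ref{III-23})--(\ref{III-24}), all land on you.
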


We will prove Theorem \ref{thm-n=1-class} by applying the Okamoto
transformation \cite{Okamoto1} from PVI$(\frac{1}{8},\frac{-1}{8},\frac{1}%
{8},\frac{3}{8})$ to PVI$(\frac{9}{8},\frac{-1}{8},\frac{1}{8},\frac{3}{8})$.
It is well known that any solution $\lambda(t)$ of PVI (\ref{46-0})
corresponds to a solution $(\lambda(t),\mu(t))$ of a Hamiltonian system (cf.
\cite{GP}):%
\begin{equation}
\frac{d\lambda(t)}{dt}=\frac{\partial K(\lambda,\mu,t)}{\partial \mu},\text{
\ }\frac{d\mu(t)}{dt}=-\frac{\partial K(\lambda,\mu,t)}{\partial \lambda},
\label{aa}%
\end{equation}
and the Okamoto transformation is a bi-rational transformation concerning
solutions of the Hamiltonian systems (\ref{aa}) with different PVI parameters.
For our case, we note that PVI$(\tfrac{1}{2}(n+\tfrac{1}{2})^{2},\tfrac{-1}%
{8},\tfrac{1}{8},\tfrac{3}{8})$ is equivalent to the Hamiltonian system
(\ref{aa}) with $K(\lambda,\mu,t)=K_{n}$ given by%
\begin{equation}
K_{n}=\frac{1}{t(t-1)}\left \{
\begin{array}
[c]{l}%
\lambda(\lambda-1)(\lambda-t)\mu^{2}\\
-\frac{1}{2}(\lambda^{2}-2t\lambda+t)\mu-\frac{n(n+1)}{4}(\lambda-t)
\end{array}
\right \}  . \label{98}%
\end{equation}
Applying a special case of \cite[Lemma 3.3]{CKL}, we know that a solution
$(\tilde{\lambda},\tilde{\mu})$ of the Hamiltonian system (\ref{aa}%
)-(\ref{98}) corresponding to PVI$(\frac{1}{8},\frac{-1}{8},\frac{1}{8}%
,\frac{3}{8})$ (i.e. $n=0$) is transformed to a solution $(\lambda,\mu)$ of
the Hamiltonian system (\ref{aa})-(\ref{98}) corresponding to PVI$(\frac{9}%
{8},\frac{-1}{8},\frac{1}{8},\frac{3}{8})$ (i.e. $n=1$) by the following
formulas:%
\begin{equation}
\mu=\tilde{\mu}-\frac{1}{2}\left(  \frac{1}{\tilde{\lambda}}+\frac{1}%
{\tilde{\lambda}-1}+\frac{1}{\tilde{\lambda}-t}\right)  , \label{III-23}%
\end{equation}%
\begin{equation}
\lambda=\tilde{\lambda}+\frac{1}{\mu}. \label{III-24}%
\end{equation}
We recall that for PVI$(\frac{1}{8},\frac{-1}{8},\frac{1}{8},\frac{3}{8})$,
the corresponding four Riccati equations are%
\begin{equation}
\frac{d\tilde{\lambda}^{(0)}}{dt}=-\frac{(\tilde{\lambda}^{(0)})^{2}%
-2t\tilde{\lambda}^{(0)}+t}{2t(t-1)},\text{ \  \ }\tilde{\mu}^{(0)}\equiv0,
\label{1000}%
\end{equation}%
\begin{equation}
\frac{d\tilde{\lambda}^{(1)}}{dt}=\frac{(\tilde{\lambda}^{(1)})^{2}%
-2\tilde{\lambda}^{(1)}+t}{2t(t-1)},\text{ \  \ }\tilde{\mu}^{(1)}\equiv
\frac{1}{2\tilde{\lambda}^{(1)}}, \label{1001}%
\end{equation}%
\begin{equation}
\frac{d\tilde{\lambda}^{(2)}}{dt}=\frac{(\tilde{\lambda}^{(2)})^{2}%
-t}{2t(t-1)},\text{ \  \ }\tilde{\mu}^{(2)}\equiv \frac{1}{2(\tilde{\lambda
}^{(2)}-1)}, \label{1002}%
\end{equation}%
\begin{equation}
\frac{d\tilde{\lambda}^{(3)}}{dt}=\frac{(\tilde{\lambda}^{(3)})^{2}%
+2(t-1)\tilde{\lambda}^{(3)}-t}{2t(t-1)},\text{ \  \ }\tilde{\mu}^{(3)}%
\equiv \frac{1}{2(\tilde{\lambda}^{(3)}-t)}. \label{1003}%
\end{equation}
In \cite{CKL1}, we have completely solved all the equations (\ref{1000}%
)-(\ref{1003}).\medskip

\noindent \textbf{Theorem 2.A.} \cite{CKL1}\  \emph{For any }$k\in
\{0,1,2,3\}$\emph{ and }$C\in \mathbb{C}\cup \{ \infty \}$\emph{,}%
\[
\tilde{\lambda}_{C}^{(k)}(t)=\frac{\wp(\tilde{p}_{C}^{(k)}(\tau)|\tau
)-e_{1}(\tau)}{e_{2}(\tau)-e_{1}(\tau)}%
\]
\emph{is a solution of the }$k$\emph{-th Riccati equation in (\ref{1000}%
)-(\ref{1003}), where }$\wp(\tilde{p}_{C}^{(k)}(\tau)|\tau)$\emph{ are given
by:}%
\begin{equation}
\wp(\tilde{p}_{C}^{(0)}(\tau)|\tau)=\frac{\eta_{2}(\tau)-C\eta_{1}(\tau
)}{C-\tau}\text{,} \label{III-13}%
\end{equation}
\emph{and}%
\begin{equation}
\wp(\tilde{p}_{C}^{(k)}(\tau)|\tau)=\frac{e_{k}(C\eta_{1}(\tau)-\eta_{2}%
(\tau))+(\frac{g_{2}}{4}-2e_{k}^{2})(C-\tau)}{C\eta_{1}(\tau)-\eta_{2}%
(\tau)+e_{k}(C-\tau)} \label{III-14}%
\end{equation}
\emph{for} $k\in \{1,2,3\}$. \emph{Furthermore, such }$\tilde{\lambda}%
_{C}^{(k)}(t)$\emph{ gives all the Riccati type solutions of PVI}$(\frac{1}%
{8},\frac{-1}{8},\frac{1}{8},\frac{3}{8})$.\medskip

Formulae (\ref{III-13}) and (\ref{III-14}) were first obtained in
\cite{Hit1,Takemura} and later we proved in \cite{CKL1} that they actually
give solutions of the four Riccati equations (\ref{1000})-(\ref{1003}). Now we
are in a position to prove Theorem \ref{thm-n=1-class}.

\begin{proof}
[Proof of Theorem \ref{thm-n=1-class}]Let $(\tilde{\lambda},\tilde{\mu})$ be
any Riccati solution of the Hamiltonian system (\ref{aa}) corresponding to
PVI$(\frac{1}{8},\frac{-1}{8},\frac{1}{8},\frac{3}{8})$, i.e. $(\tilde
{\lambda},\tilde{\mu})$ satisfies one of the four Riccati equations
(\ref{1000})-(\ref{1003}). Let $(\lambda,\mu)$ be defined by the Okamoto
transformation (\ref{III-23})-(\ref{III-24}) via $(\tilde{\lambda},\tilde{\mu
})$. Then $(\lambda,\mu)$ is a solution of the Hamiltonian system (\ref{aa})
corresponding to PVI$(\frac{9}{8},\frac{-1}{8},\frac{1}{8},\frac{3}{8}%
)$\textit{.} Applying the Hamiltonian system (\ref{aa})-(\ref{98}) with $n=1$,
we obtain%
\begin{equation}
\lambda^{\prime}(t)=\frac{1}{t(t-1)}\left[  2\lambda(\lambda-1)(\lambda
-t)\mu-\tfrac{1}{2}(\lambda^{2}-2t\lambda+t)\right]  , \label{III-25}%
\end{equation}
which gives%
\begin{equation}
\mu=\frac{t(t-1)\lambda^{\prime}+\tfrac{1}{2}(\lambda^{2}-2t\lambda
+t)}{2\lambda(\lambda-1)(\lambda-t)}. \label{III-25-1}%
\end{equation}
In the following proof, we omit the superscript $(k)$ and the subscript $C$
for convenience, i.e. write $\tilde{\lambda}_{C}^{(k)}=\tilde{\lambda}$ and so on.

\textbf{Case 1.} $\tilde{p}(\tau)$ satisfies (\ref{III-13}), i.e.
$\tilde{\lambda}(t)$ solves the Riccati equation (\ref{1000}).

Then $\tilde{\mu}\equiv0$, so we see from (\ref{III-23})-(\ref{III-24}) that%
\[
\lambda=\tilde{\lambda}-\frac{2\tilde{\lambda}(\tilde{\lambda}-1)(\tilde
{\lambda}-t)}{\tilde{\lambda}(\tilde{\lambda}-1)+\tilde{\lambda}%
(\tilde{\lambda}-t)+(\tilde{\lambda}-1)(\tilde{\lambda}-t)}.
\]
By this, $t=\frac{e_{3}-e_{1}}{e_{2}-e_{1}}$ and $4\wp^{3}-g_{2}\wp
-g_{3}=4(\wp-e_{1})(\wp-e_{2})(\wp-e_{3})$, we easily obtain%
\begin{equation}
\wp(p(\tau)|\tau)=\frac{4\wp(\tilde{p}(\tau)|\tau)^{3}+g_{2}\wp(\tilde{p}%
(\tau)|\tau)+2g_{3}}{12\wp(\tilde{p}(\tau)|\tau)^{2}-g_{2}}. \label{III-26}%
\end{equation}
Inserting (\ref{III-13}), i.e $\wp(\tilde{p}(\tau)|\tau)=\frac{\eta_{2}%
(\tau)-C\eta_{1}(\tau)}{C-\tau}$ into (\ref{III-26}), we see that $\wp
(p(\tau)|\tau)$ satisfies (\ref{III-17}). To obtain the first order
differential equation that $\lambda(t)$ solves, we see from $\tilde{\mu}%
\equiv0$ and (\ref{III-23})-(\ref{III-24}) that%
\begin{equation}
\mu+\frac{1}{2}\left(  \frac{1}{\lambda-\frac{1}{\mu}}+\frac{1}{\lambda
-\frac{1}{\mu}-1}+\frac{1}{\lambda-\frac{1}{\mu}-t}\right)  =0. \label{III-27}%
\end{equation}
Inserting (\ref{III-25-1}) into (\ref{III-27}), a straightforward computation
gives%
\[
P_{0}(\lambda^{\prime},\lambda,t)\cdot \left[  2t(t-1)\lambda^{\prime}%
+\lambda^{2}-2t\lambda+t\right]  =0.
\]
If $2t(t-1)\lambda^{\prime}+\lambda^{2}-2t\lambda+t=0$, then $\lambda(t)$
solves the Riccati equation (\ref{1000}) and so satisfies PVI$(\frac{1}%
{8},\frac{-1}{8},\frac{1}{8},\frac{3}{8})$, clearly a contradiction. Thus
$\lambda(t)$ solves the first order algebraic equation (\ref{III-19}).

\textbf{Case 2.} $\tilde{p}(\tau)$ satisfies (\ref{III-14}), i.e.
$\tilde{\lambda}(t)$ solves the Riccati equation (\ref{1001}) if $k=1$,
(\ref{1002}) if $k=2$ and (\ref{1003}) if $k=3$.

Without loss of generality, we may assume $k=1$, since the other two cases
$k=2,3$ can be proved in the same way. Then $\tilde{\lambda}(t)$ solves the
Riccati equation (\ref{1001}) and $\tilde{\mu}\equiv \frac{1}{2\tilde{\lambda}%
}$. Consequently, (\ref{III-23})-(\ref{III-24}) give%
\begin{equation}
\lambda=\frac{(1+t)\tilde{\lambda}-2t}{2\tilde{\lambda}-1-t}, \label{III-28}%
\end{equation}
and then%
\begin{equation}
\wp(p(\tau)|\tau)=-\frac{e_{1}\wp(\tilde{p}(\tau)|\tau)+2e_{1}^{2}-\frac
{g_{2}}{2}}{2\wp(\tilde{p}(\tau)|\tau)+e_{1}}, \label{III-29}%
\end{equation}
where $t=\frac{e_{3}-e_{1}}{e_{2}-e_{1}}$, $e_{1}+e_{2}+e_{3}=0$ and
$-\frac{g_{2}}{4}=e_{2}e_{3}-e_{1}^{2}$ are used. Inserting (\ref{III-14})
with $k=1$ into (\ref{III-29}), it is easy to see that $\wp(p(\tau)|\tau)$
satisfies (\ref{III-18}) with $k=1$. To obtain the first order differential
equation that $\lambda(t)$ satisfies, we note from (\ref{III-28}) that%
\[
\tilde{\lambda}=\frac{(1+t)\lambda-2t}{2\lambda-1-t}.
\]
This, together with (\ref{III-24}), implies%
\begin{equation}
\mu=\frac{1}{\lambda-\tilde{\lambda}}=\frac{2\lambda-1-t}{2(\lambda
-1)(\lambda-t)}. \label{III-30}%
\end{equation}
Inserting (\ref{III-30}) into (\ref{III-25}), we see that $\lambda(t)$ solves
the Riccati equation (\ref{III-20}).

Finally, we note that the Okamoto transformation is invertible. Since Theorem
2.A shows that such $\tilde{\lambda}(t)$\emph{ }gives all the Riccati type
solutions of PVI$(\frac{1}{8},\frac{-1}{8},\frac{1}{8},\frac{3}{8})$, we
conclude that such $\lambda(t)$\emph{ }gives all the Riccati type solutions of
PVI$(\frac{9}{8},\frac{-1}{8},\frac{1}{8},\frac{3}{8})$. The proof is complete.
\end{proof}

Remark that if $C=\infty$, then formulae (\ref{III-17})-(\ref{III-18}) turn to
be%
\[
\wp(p_{\infty}^{(0)}(\tau)|\tau)=\frac{-4\eta_{1}^{3}-g_{2}\eta_{1}+2g_{3}%
}{12\eta_{1}^{2}-g_{2}},
\]%
\[
\wp(p_{\infty}^{(k)}(\tau)|\tau)=\frac{(\frac{g_{2}}{2}-3e_{k}^{2})\eta
_{1}+\frac{g_{2}}{4}e_{k}}{3e_{k}\eta_{1}+\frac{g_{2}}{2}-3e_{k}^{2}},
\]
respectively. To apply Theorem \ref{thm-n=1-class} in Section \ref{SZPA}, we
conclude this section by proving the following technical lemma.

\begin{lemma}
\label{lem-n=1}Let $C\in \mathbb{C}\cup \{ \infty \}$, $k\in \{1,2,3\}$ and
$\tau \in \mathbb{H}$.

\begin{itemize}
\item[(i)] If $C\not =\infty$ and%
\begin{equation}
(C-\tau)[12(C\eta_{1}-\eta_{2})^{2}-g_{2}(C-\tau)^{2}]=0, \label{III-31}%
\end{equation}
then%
\[
-4(C\eta_{1}-\eta_{2})^{3}-g_{2}(C\eta_{1}-\eta_{2})(C-\tau)^{2}+2g_{3}%
(C-\tau)^{3}\not =0.
\]

\item[(ii)] If $12\eta_{1}^{2}-g_{2}=0$, then $-4\eta_{1}^{3}-g_{2}\eta
_{1}+2g_{3}\not =0$.

\item[(iii)] If $C\not =\infty$ and%
\begin{equation}
3e_{k}(C\eta_{1}-\eta_{2})+(\tfrac{g_{2}}{2}-3e_{k}^{2})(C-\tau)=0,
\label{III-32}%
\end{equation}
then%
\[
(\tfrac{g_{2}}{2}-3e_{k}^{2})(C\eta_{1}-\eta_{2})+\tfrac{g_{2}}{4}e_{k}%
(C-\tau)\not =0.
\]

\item[(iv)] If $3e_{k}\eta_{1}+\frac{g_{2}}{2}-3e_{k}^{2}=0$, then
$(\frac{g_{2}}{2}-3e_{k}^{2})\eta_{1}+\frac{g_{2}}{4}e_{k}\not =0$.
\end{itemize}
\end{lemma}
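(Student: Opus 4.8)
The four claims are all of the form ``numerator and denominator of the rational expression for $\wp(p_C^{(k)}(\tau)|\tau)$ in \eqref{III-17}--\eqref{III-18} never vanish simultaneously,'' so my plan rests on two non-degeneracy inputs that I would record first. Writing $A := C\eta_1 - \eta_2$ and $B := C - \tau$, the Legendre relation $\eta_1\tau - \eta_2 = 2\pi i$ yields the identity $A - \eta_1 B = \eta_1\tau - \eta_2 = 2\pi i$; in particular $(A,B)\neq(0,0)$ for every $\tau\in\mathbb{H}$ and every finite $C$ (indeed if $B=0$ then $A=2\pi i\neq0$). The second input is that the discriminant $\Delta := g_2^3 - 27g_3^2$ is nowhere zero on $\mathbb{H}$, because $e_1,e_2,e_3$ are pairwise distinct; I would also use $12e_k^2 - g_2 = 4(e_k-e_i)(e_k-e_j)\neq0$, obtained by differentiating $4x^3 - g_2 x - g_3 = 4\prod_m(x-e_m)$ and evaluating at $x=e_k$.

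I would dispose of (iii) and (iv) first, since they are linear-algebraic. The two expressions in (iii) are exactly the components of $M(A,B)^{\mathsf T}$ with
\[
M=\begin{pmatrix} 3e_k & \tfrac{g_2}{2}-3e_k^2 \\ \tfrac{g_2}{2}-3e_k^2 & \tfrac{g_2}{4}e_k \end{pmatrix}.
\]
Since $(A,B)\neq(0,0)$, simultaneous vanishing would force $M$ to be singular, so it suffices to check $\det M\neq0$. A short expansion factors it as $\det M = -9\bigl(e_k^2-\tfrac{g_2}{3}\bigr)\bigl(e_k^2-\tfrac{g_2}{12}\bigr)$. The factor $e_k^2-\tfrac{g_2}{12}=\tfrac13(e_k-e_i)(e_k-e_j)$ is nonzero by the identity above; the factor $e_k^2-\tfrac{g_2}{3}$ vanishes only if $g_2=3e_k^2$, which together with $4e_k^3-g_2e_k-g_3=0$ forces $g_3=e_k^3$ and hence $\Delta=27e_k^6-27e_k^6=0$, contradicting $\Delta\neq0$. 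Thus $\det M\neq0$, giving (iii); part (iv) is the same argument with the nonzero vector $(\eta_1,1)$ and the same matrix $M$.

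For (i) and (ii) I would show that the cubic $N := -4A^3 - g_2AB^2 + 2g_3B^3$ shares no zero with either factor $B$ or $D := 12A^2 - g_2B^2$ of the denominator, under $(A,B)\neq(0,0)$. If $B=0$ then $A=2\pi i$ and $N=-4A^3\neq0$. If $D=0$ with $B\neq0$, suppose also $N=0$; reducing $A^3$ via $A^2=\tfrac{g_2}{12}B^2$ turns $N=0$ into $-\tfrac{4g_2}{3}AB^2 + 2g_3B^3=0$, so when $g_2\neq0$ we get $A=\tfrac{3g_3}{2g_2}B$, and substituting back into $A^2=\tfrac{g_2}{12}B^2$ gives $27g_3^2=g_2^3$, i.e. $\Delta=0$, a contradiction. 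When $g_2=0$, the hypothesis $D=0$ forces $A=0$, hence $B\neq0$, and $\Delta=-27g_3^2\neq0$ makes $g_3\neq0$, so $N=2g_3B^3\neq0$. This proves (i), and (ii) is the specialization to $(\eta_1,1)$, handled by the identical elimination. Conceptually, (i)/(ii) say that $\mathrm{Res}_{A,B}(D,N)$ is a nonzero multiple of $\Delta$.

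Every computation here is elementary; the step I would watch most carefully is the bookkeeping of the degenerate sub-cases ($B=0$ and $g_2=0$), ensuring each is covered and that the two non-degeneracy facts—$2\pi i\neq0$ from Legendre and $\Delta\neq0$—are precisely what forbids simultaneous vanishing. I do not expect any genuinely hard estimate.
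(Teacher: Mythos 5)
Your proposal is correct and follows essentially the same route as the paper's proof: the Legendre relation giving $(x,y)\neq(0,0)$, the substitution $x=\pm\sqrt{g_2/12}\,y$ into the cubic to force $g_2^3-27g_3^2=0$ for (i)--(ii), and the nonvanishing of the determinant of the symmetric matrix $\begin{pmatrix} 3e_k & \tfrac{g_2}{2}-3e_k^2 \\ \tfrac{g_2}{2}-3e_k^2 & \tfrac{g_2}{4}e_k \end{pmatrix}$ for (iii)--(iv). The only differences are cosmetic: you factor the determinant as $-9\bigl(e_k^2-\tfrac{g_2}{3}\bigr)\bigl(e_k^2-\tfrac{g_2}{12}\bigr)$ and rule out each factor via the discriminant, where the paper factors it directly as $(e_i-e_k)(e_j-e_k)(e_i-e_j)^2$, and you spell out the degenerate subcases ($B=0$ and $g_2=0$) that the paper treats tersely.
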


\begin{proof}
In this proof we denote $x=C\eta_{1}-\eta_{2}$ and $y=C-\tau$ for convenience.
By the Legendre relation $\tau \eta_{1}-\eta_{2}=2\pi i$ we have $x=2\pi
i+\eta_{1}y$, so%
\begin{equation}
x,y\text{ can not vanish simultaneously.} \label{III-33}%
\end{equation}

(i)-(ii). First consider $C\not =\infty$. Suppose that both (\ref{III-31}) and%
\begin{equation}
-4x^{3}-g_{2}xy^{2}+2g_{3}y^{3}=0 \label{III-34}%
\end{equation}
hold. Then (\ref{III-33}) gives $x\not =0$, $y\not =0$ and so (\ref{III-31})
implies $x=\pm \sqrt{g_{2}/12}y$. Inserting this into (\ref{III-34}), we easily
obtain $g_{2}^{3}-27g_{3}^{2}=0$, a contradiction. This proves (i). Similarly
we can prove (ii).

(iii)-(iv). Fix $k\in \{1,2,3\}$ and let $\{i,j\}=\{1,2,3\} \backslash \{k\}$.
By $\tfrac{g_{2}}{4}=e_{k}^{2}-e_{i}e_{j}$ and $e_{i}+e_{j}+e_{k}=0$ we
have{\allowdisplaybreaks%
\begin{align*}
\det%
\begin{pmatrix}
3e_{k} & \tfrac{g_{2}}{2}-3e_{k}^{2}\\
\tfrac{g_{2}}{2}-3e_{k}^{2} & \tfrac{g_{2}}{4}e_{k}%
\end{pmatrix}
&  =(2e_{k}^{2}+e_{i}e_{j})(e_{k}^{2}-4e_{i}e_{j})\\
&  =(2e_{i}+e_{j})(e_{i}+2e_{j})(e_{i}-e_{j})^{2}\\
&  =(e_{i}-e_{k})(e_{j}-e_{k})(e_{i}-e_{j})^{2}\not =0.
\end{align*}
}Together with (\ref{III-33}), we obtain (iii)-(iv).
\end{proof}

\section{Simple zero property and application}

\label{SZPA}

The purpose of this section is to prove Theorems \ref{SZ-thm1}-\ref{SZ-thm3}.
For $C\in \mathbb{C}\cup \{ \infty \}$ and $k\in \{1,2,3\}$, we recall functions
$f_{k,C}(\tau)$ from $\mathbb{H}$ to $\mathbb{C}\cup \{ \infty \}$ defined in
Section 1:%
\begin{equation}
f_{0,C}(\tau):=\left \{
\begin{array}
[c]{c}%
12(C\eta_{1}(\tau)-\eta_{2}(\tau))^{2}-g_{2}(\tau)(C-\tau)^{2}\text{ if
}C\not =\infty,\\
12\eta_{1}(\tau)^{2}-g_{2}(\tau)\text{ \  \  \  \  \ if \  \  \  \  \ }C=\infty,
\end{array}
\right.  \label{III-35}%
\end{equation}%
\begin{equation}
f_{k,C}(\tau):=\left \{
\begin{array}
[c]{l}%
3e_{k}(\tau)(C\eta_{1}(\tau)-\eta_{2}(\tau))\\
+(\tfrac{g_{2}(\tau)}{2}-3e_{k}(\tau)^{2})(C-\tau)\text{ \  \ if \  \ }%
C\not =\infty,\\
3e_{k}(\tau)\eta_{1}(\tau)+\tfrac{g_{2}(\tau)}{2}-3e_{k}(\tau)^{2}\text{
\  \ if \  \ }C=\infty,
\end{array}
\right.  \label{III-35-1}%
\end{equation}
and functions $\phi_{\pm}(\tau)$ and $\phi_{k}(\tau)$ defined in Section 1:%
\begin{equation}
\phi_{\pm}(\tau):=\tau-\frac{2\pi i}{\eta_{1}(\tau)\pm \sqrt{g_{2}(\tau)/12}},
\label{III-35-2}%
\end{equation}%
\begin{equation}
\phi_{k}(\tau):=\tau-\frac{6\pi ie_{k}(\tau)}{3e_{k}(\tau)\eta_{1}(\tau
)+\frac{g_{2}(\tau)}{2}-3e_{k}(\tau)^{2}}=\tau-\frac{6\pi ie_{k}(\tau
)}{f_{k,\infty}(\tau)}. \label{III-36}%
\end{equation}
Clearly $\phi_{\pm}(\tau)$ have branch points at $\tau \in \mathfrak{S}$, where
\[
\mathfrak{S}=\left \{  \frac{ae^{\pi i/3}+b}{ce^{\pi i/3}+d}\left \vert
\begin{pmatrix}
a & b\\
c & d
\end{pmatrix}
\in SL(2,\mathbb{Z})\right.  \right \}  ,
\]
because $g(\tau)=0$ if and only if $\tau \in \mathfrak{S}$. Remark that%
\begin{equation}
\phi_{\pm}(\tau)=\tau-\frac{2\pi i}{\eta_{1}(\tau)}=\frac{\eta_{2}(\tau)}%
{\eta_{1}(\tau)}\not \in \mathbb{R\cup \{ \infty \}},\text{ \  \ }\forall \tau
\in \mathfrak{S.} \label{III-40}%
\end{equation}
In fact, for $\tau=\rho:=e^{\pi i/3}$, since $\wp(z|\rho)=\rho^{2}\wp(\rho
z|\rho)$ (cf. \cite{LW}), we have $\zeta(z|\rho)=\rho \zeta(\rho z|\rho)$,
which gives%
\[
\eta_{1}(\rho)=2\zeta(1/2|\rho)=2\rho \zeta(\rho/2|\rho)=\rho \eta_{2}(\rho),
\]
i.e., $\frac{\eta_{2}(\rho)}{\eta_{1}(\rho)}=\frac{1}{\rho}\not \in
\mathbb{R\cup \{ \infty \}}$. For any other $\tau \in \mathfrak{S}$, $\tau
=\frac{a\rho+b}{c\rho+d}$ for some $%
\begin{pmatrix}
a & b\\
c & d
\end{pmatrix}
\in SL(2,\mathbb{Z})$. Then by (cf. \cite[(4.3)]{CKLW})%
\[%
\begin{pmatrix}
\eta_{2}(\tau)\\
\eta_{1}(\tau)
\end{pmatrix}
=(c\rho+d)%
\begin{pmatrix}
a & b\\
c & d
\end{pmatrix}%
\begin{pmatrix}
\eta_{2}(\rho)\\
\eta_{1}(\rho)
\end{pmatrix}
,
\]
we obtain%
\[
\frac{\eta_{2}(\tau)}{\eta_{1}(\tau)}=\frac{a\eta_{2}(\rho)+b\eta_{1}(\rho
)}{c\eta_{2}(\rho)+d\eta_{1}(\rho)}=\frac{a\frac{1}{\rho}+b}{c\frac{1}{\rho
}+d}\not \in \mathbb{R\cup \{ \infty \}},
\]
so (\ref{III-40}) holds. As an application of Theorem \ref{thm-n=1-class}, we
obtain the following result, which immediately implies Theorems \ref{SZ-thm1}
and \ref{SZ-thm2}.

\begin{theorem}
\label{local-1-1}For $C\in \mathbb{C}\cup \{ \infty \}$ and $k\in \{1,2,3\}$, let
$f_{0,C}(\tau)$, $f_{k,C}(\tau)$, $\phi_{\pm}(\tau)$ and $\phi_{k}(\tau)$ be
defined in (\ref{III-35})-(\ref{III-36}).

\begin{itemize}
\item[(i)] For any $k\in \{0,1,2,3\}$, $f_{k,C}(\tau)$ has only simple zeros in
$\mathbb{H}$.

\item[(ii)] $\phi_{\pm}^{\prime}(\tau)\not =0$ for any $\tau \in \mathbb{H}%
\backslash \mathfrak{S}$. In particular, $\phi_{\pm}(\tau)$ has only simple
zeros in $\mathbb{H}$ and is locally one-to-one from $\mathbb{H}%
\backslash \mathfrak{S}$ to $\mathbb{C}\cup \{ \infty \}$.

\item[(iii)] For $k\in \{1,2,3\}$, $\phi_{k}^{\prime}(\tau)\not =0$ for any
$\tau \in \mathbb{H}$. In particular, $\phi_{k}(\tau)$ has only simple zeros in
$\mathbb{H}$ and is locally one-to-one from $\mathbb{H}$ to $\mathbb{C}\cup \{
\infty \}$.
\end{itemize}
\end{theorem}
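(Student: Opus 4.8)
\emph{Proof strategy.} The entire theorem will be deduced from Theorem \ref{thm-n=1-class} together with the Painlev\'e property, with no direct computation of derivatives. Since $\lambda_C^{(k)}(t)$ solves PVI$(\tfrac98,-\tfrac18,\tfrac18,\tfrac38)$, whose leading parameter is $\alpha=\tfrac98\neq0$, the Painlev\'e property recalled in Section \ref{RTS-PVI} guarantees that $\lambda_C^{(k)}(t)$ has \emph{at most simple poles} on $\mathbb{C}\setminus\{0,1\}$. The map $t=\frac{e_3-e_1}{e_2-e_1}$ is a local biholomorphism from $\mathbb{H}$ onto $\mathbb{C}\setminus\{0,1\}$, and $\wp(p_C^{(k)}(\tau)|\tau)=(e_2-e_1)\lambda_C^{(k)}(t)+e_1$ with $e_1,e_2$ finite and holomorphic on $\mathbb{H}$; hence $\wp(p_C^{(k)}(\tau)|\tau)$ is a single-valued meromorphic function of $\tau$ with at most simple poles. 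This is the engine of the whole argument.

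\emph{Part (i).} Fix $k$ and $C$. Inspecting (\ref{III-17})--(\ref{III-18}), the denominator of $\wp(p_C^{(k)}(\tau)|\tau)$ vanishes exactly where $f_{k,C}(\tau)=0$ (for $k\ge1$) and where $(C-\tau)f_{0,C}(\tau)=0$ (for $k=0$). At any such point Lemma \ref{lem-n=1} shows the corresponding numerator does \emph{not} vanish, so $\wp(p_C^{(k)}(\tau)|\tau)$ has there a genuine, non-removable pole whose order equals the vanishing order of the denominator. Since that pole is at most simple, the denominator has at most a simple zero. For $k\ge1$ this is precisely the claim that $f_{k,C}$ has only simple zeros. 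For $k=0$ the denominator is $(C-\tau)f_{0,C}(\tau)$; by the Legendre relation $f_{0,C}(C)=12(2\pi i)^2\neq0$, so $\tau=C$ is not a zero of $f_{0,C}$, the factor $C-\tau$ contributes only a separate simple zero, and therefore every zero of $f_{0,C}$ is again simple. The case $C=\infty$ is identical, using parts (ii) and (iv) of Lemma \ref{lem-n=1}.

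\emph{Parts (ii)--(iii): the bridge to $\phi$.} The decisive observation is the factorization, which I will verify as a polynomial identity in $C$ using $\tau\eta_1-\eta_2=2\pi i$:
\[
f_{k,C}(\tau)=f_{k,\infty}(\tau)\,(C-\phi_k(\tau))\qquad(k=1,2,3),
\]
\[
f_{0,C}(\tau)=f_{0,\infty}(\tau)\,(C-\phi_+(\tau))(C-\phi_-(\tau)).
\]
Fix $\tau_0$ with $f_{k,\infty}(\tau_0)\neq0$ (so $\phi_k(\tau_0)$ is finite) and set $C:=\phi_k(\tau_0)$; then $\tau_0$ is a zero of $f_{k,C}$, hence a \emph{simple} one by part (i), i.e. $f_{k,C}'(\tau_0)\neq0$. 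Differentiating the factorization and using $C-\phi_k(\tau_0)=0$ gives $f_{k,C}'(\tau_0)=-f_{k,\infty}(\tau_0)\phi_k'(\tau_0)$, whence $\phi_k'(\tau_0)\neq0$. The same computation for $f_{0,C}$ at a zero arising from the $\phi_+$-factor yields $f_{0,C}'(\tau_0)=-f_{0,\infty}(\tau_0)\phi_+'(\tau_0)(\phi_+(\tau_0)-\phi_-(\tau_0))$; on $\mathbb{H}\setminus\mathfrak{S}$ one has $\phi_+(\tau_0)-\phi_-(\tau_0)\neq0$ (this difference vanishes precisely when $g_2(\tau_0)=0$), so part (i) again forces $\phi_+'(\tau_0)\neq0$, and symmetrically $\phi_-'(\tau_0)\neq0$. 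It remains to treat the poles, where $f_{k,\infty}(\tau_0)=0$ and $\phi$ is infinite. Here the $C=\infty$ case of part (i) says $f_{k,\infty}$ has a \emph{simple} zero at $\tau_0$, and $e_k(\tau_0)\neq0$ (otherwise $e_k(\tau_0)=0$ with $f_{k,\infty}(\tau_0)=0$ would force $g_2(\tau_0)=g_3(\tau_0)=0$, impossible); thus $\phi_k=\tau-6\pi i e_k/f_{k,\infty}$ has a simple pole and is locally one-to-one onto a neighbourhood of $\infty$. For $\phi_\pm$, a point with $f_{0,\infty}(\tau_0)=0$ and $\tau_0\notin\mathfrak{S}$ is a simple zero of exactly one local factor $\eta_1\mp\sqrt{g_2/12}$ (the other is nonzero since $\eta_1(\tau_0)=\pm\sqrt{g_2(\tau_0)/12}\neq0$), so one branch acquires a simple pole while the other stays finite with nonvanishing derivative. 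Combining the finite and infinite cases gives local injectivity of $\phi_k$ on $\mathbb{H}$ and of $\phi_\pm$ on $\mathbb{H}\setminus\mathfrak{S}$; the simple-zero statements follow because any zero of these functions lies where they are finite (indeed (\ref{III-40}) shows $\phi_\pm\neq0$ on $\mathfrak{S}$).

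\emph{Main obstacle.} All the conceptual content sits in Part (i): once simple zeros of the $f_{k,C}$ are extracted from the Painlev\'e property, the passage to local injectivity of $\phi$ is bookkeeping through the factorization. The two genuinely delicate points are (a) invoking Lemma \ref{lem-n=1} to certify that every zero of a denominator is a \emph{true} pole, so that pole order equals zero order and simplicity transfers; and (b) the separate handling of the poles of $\phi_k$ and $\phi_\pm$, where local one-to-oneness cannot be read off from a nonvanishing derivative and must instead be deduced from the simple-zero property of the $C=\infty$ functions $f_{k,\infty}$ together with $e_k\neq0$.
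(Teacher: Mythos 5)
Your overall strategy coincides with the paper's own proof. Part (i) is exactly the paper's argument: a zero $\tau_0$ of $f_{k,C}$ corresponds, via Theorem \ref{thm-n=1-class} and Lemma \ref{lem-n=1}, to a genuine pole of the Riccati-type solution $\lambda_C^{(k)}$ of PVI$(\frac98,-\frac18,\frac18,\frac38)$, which is at most simple because $\alpha=\frac98\neq0$, and $t'(\tau)\neq0$ transfers simplicity back to $\tau$ (your explicit handling of the extra factor $C-\tau$ for $k=0$ via $f_{0,C}(C)=12(2\pi i)^2\neq0$ is a nice touch the paper leaves implicit). Parts (ii)--(iii) also use the paper's mechanism of setting $C:=\phi(\tau_0)$ so that $\tau_0$ becomes a simple zero of $f_{k,C}$; your factorizations $f_{k,C}=f_{k,\infty}(C-\phi_k)$ and $f_{0,C}=f_{0,\infty}(C-\phi_+)(C-\phi_-)$ are correct consequences of the Legendre relation and are a transparent repackaging of the paper's auxiliary functions, and your treatment of the poles of $\phi_k$ (simple zero of $f_{k,\infty}$ plus $e_k(\tau_0)\neq0$) matches the paper.

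There is, however, one genuine gap, in part (ii). Your derivative formula $f_{0,C}'(\tau_0)=-f_{0,\infty}(\tau_0)\,\phi_+'(\tau_0)\,(\phi_+(\tau_0)-\phi_-(\tau_0))$ presupposes that \emph{both} branches $\phi_\pm$ are finite at $\tau_0$, i.e. $f_{0,\infty}(\tau_0)\neq0$. At a point $\tau_0\in\mathbb{H}\setminus\mathfrak{S}$ with $f_{0,\infty}(\tau_0)=0$, exactly one factor, say $\eta_1+\sqrt{g_2/12}$, vanishes; then $\phi_+$ has a simple pole (that half of your argument is fine), but $\phi_-$ is finite there, $\phi_+(\tau_0)-\phi_-(\tau_0)=\infty$, and your formula degenerates into the indeterminate product $0\cdot\infty$. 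For this finite branch you only assert that it ``stays finite with nonvanishing derivative''; no proof is given, and none of your established steps covers it. This is precisely where the paper's device of working with the \emph{holomorphic} function $\psi(\tau)=C\eta_1-\eta_2+\sqrt{g_2/12}\,(C-\tau)$, rather than the meromorphic $C-\phi_\pm$, pays off: since $f_{0,C}=12\bigl[C\eta_1-\eta_2-\sqrt{g_2/12}\,(C-\tau)\bigr]\psi(\tau)$ is a product of holomorphic factors, simplicity of its zero forces $\psi'(\tau_0)\neq0$ regardless of what the cofactor does, giving $\phi'(\tau_0)\neq0$ uniformly. The repair of your version is short and in your own spirit: near such $\tau_0$ regroup $f_{0,C}=\bigl[f_{0,\infty}\,(C-\phi_+)\bigr](C-\phi_-)$, observe that $f_{0,\infty}(C-\phi_+)=12(\eta_1-\sqrt{g_2/12})\bigl[(C-\tau)(\eta_1+\sqrt{g_2/12})+2\pi i\bigr]$ is holomorphic with value $12(\eta_1-\sqrt{g_2/12})(\tau_0)\cdot2\pi i\neq0$ at $\tau_0$, and then apply part (i) with $C:=\phi_-(\tau_0)$ to conclude $\phi_-'(\tau_0)\neq0$. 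With this patch your proof is complete.
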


\begin{proof}
(i) Recalling%
\[
\lambda_{C}^{(k)}(t)=\frac{\wp(p_{C}^{(k)}(\tau)|\tau)-e_{1}(\tau)}{e_{2}%
(\tau)-e_{1}(\tau)},\text{ \ }t=t(\tau)=\frac{e_{3}(\tau)-e_{1}(\tau)}%
{e_{2}(\tau)-e_{1}(\tau)},
\]
Theorem \ref{thm-n=1-class} and Lemma \ref{lem-n=1} imply that for any zero
$\tau_{0}\in \mathbb{H}$ of $f_{k,C}(\tau)$, $t(\tau_{0})\in \mathbb{C}%
\backslash \{0,1\}$ is a pole of the corresponding solution $\lambda_{C}%
^{(k)}(t)$ to PVI$(\frac{9}{8},\frac{-1}{8}$, $\frac{1}{8},\frac{3}{8})$.
Since $t^{\prime}(\tau)\not =0$ for all $\tau \in \mathbb{H}$ and any pole
$t_{0}\in \mathbb{C}\backslash \{0,1\}$ of any solution of PVI$(\frac{9}%
{8},\frac{-1}{8}$, $\frac{1}{8},\frac{3}{8})$ is a simple pole (cf.
\cite[Proposition 1.4.1]{GP} or \cite[Theorem 3.A]{CKL}), the assertion (i) follows readily.

(ii) Consider function $\phi_{+}(\tau)$. Fix any $\tau_{0}\in \mathbb{H}%
\backslash \mathfrak{S}$, then $g_{2}(\tau_{0})\not =0$. If $\eta_{1}(\tau
_{0})+\sqrt{g_{2}(\tau_{0})/12}=0$, then assertion (i) implies that $\tau_{0}$
is a simple zero of $\eta_{1}(\tau)+\sqrt{g_{2}(\tau)/12}$, so $\phi
_{+}^{\prime}(\tau_{0})\not =0$. It suffices to consider $\eta_{1}(\tau
_{0})+\sqrt{g_{2}(\tau_{0})/12}\not =0$. Then by letting%
\begin{equation}
C:=\phi_{+}(\tau_{0})=\tau_{0}-\frac{2\pi i}{\eta_{1}(\tau_{0})+\sqrt
{g_{2}(\tau_{0})/12}}, \label{III-38}%
\end{equation}
we see from $\tau \eta_{1}-\eta_{2}=2\pi i$ that $\tau_{0}$ is a zero of
\begin{align*}
\psi(\tau)  &  :=2\pi i+(C-\tau)\left(  \eta_{1}(\tau)+\sqrt{g_{2}/12}\right)
\\
&  =C\eta_{1}(\tau)-\eta_{2}(\tau)+\sqrt{g_{2}/12}(C-\tau),
\end{align*}
which implies that $\tau_{0}$ is a zero of
\[
f_{0,C}(\tau)=12\left[  C\eta_{1}(\tau)-\eta_{2}(\tau)-\sqrt{g_{2}/12}%
(C-\tau)\right]  \psi(\tau).
\]
Then (i) gives $\psi^{\prime}(\tau_{0})\not =0$, i.e.,
\[
\eta_{1}(\tau_{0})+\sqrt{g_{2}(\tau_{0})/12}+(\tau_{0}-C)\frac{d}{d\tau
}\left.  \left(  \eta_{1}(\tau)+\sqrt{g_{2}/12}\right)  \right \vert
_{\tau=\tau_{0}}\not =0.
\]
Together with (\ref{III-38}) and (\ref{III-35-2}), we easily obtain $\phi
_{+}^{\prime}(\tau_{0})\not =0$. Similarly we can prove $\phi_{-}^{\prime
}(\tau)\not =0$ for any $\tau \in \mathbb{H}\backslash \mathfrak{S}$. This together with (\ref{III-40}) proves (ii).

(iii) Now we consider function $\phi_{k}(\tau)$ defined in (\ref{III-36}). Fix
$k\in \{1,2,3\}$ and any $\tau_{0}\in \mathbb{H}$. If $f_{k,\infty}(\tau_{0}%
)=0$, then it is easy to see that $e_{k}(\tau_{0})\not =0$ and assertion (i)
shows that $\tau_{0}$ is a simple zero of $f_{k,\infty}$, so $\phi_{k}%
^{\prime}(\tau_{0})\not =0$. It suffices to consider $f_{k,\infty}(\tau
_{0})\not =0$. Then by letting%
\begin{equation}
C:=\phi_{k}(\tau_{0})=\tau_{0}-\frac{6\pi ie_{k}(\tau_{0})}{f_{k,\infty}%
(\tau_{0})}, \label{III-39}%
\end{equation}
it follows from $\tau \eta_{1}-\eta_{2}=2\pi i$ that $f_{k,C}(\tau_{0})=0$,
where $f_{k,C}(\tau)$ is defined in (\ref{III-35-1}), i.e.,%
\[
f_{k,C}(\tau)=6\pi ie_{k}(\tau)+(C-\tau)f_{k,\infty}(\tau).
\]
Again (i) gives $f_{k,C}^{\prime}(\tau_{0})\not =0$, i.e.,%
\[
6\pi ie_{k}^{\prime}(\tau_{0})-f_{k,\infty}(\tau_{0})+(C-\tau_{0})f_{k,\infty
}^{\prime}(\tau_{0})\not =0.
\]
Together with (\ref{III-39}) and (\ref{III-36}), we easily obtain $\phi
_{k}^{\prime}(\tau_{0})\not =0$.

This completes the proof.
\end{proof}

It is interesting to note that Theorem \ref{local-1-1} has important
applications in the theory of Green functions. Recall the multiple Green
function $G_{2}(z_{1},z_{2}|\tau)$ defined by
\[
G_{2}(z_{1},z_{2}|\tau):=G(z_{1}-z_{2}|\tau)-2G(z_{1}|\tau)-2G(z_{2}|\tau).
\]
It is easy to see that $G_{2}$ has five trivial critical points $\{(\frac
{1}{2}\omega_{i},\frac{1}{2}\omega_{j})$ $|$ $i\not =j\}$ and $\{(q_{\pm
},-q_{\pm})$ $|$ $\wp(q_{\pm})=\pm \sqrt{g_{2}/12}\}$, see \cite{LW2} or
Appendix \ref{TCPGF}. The function $G_{2}$ has deep connections with the
following mean field equation%
\begin{equation}
\triangle u+e^{u}=16\pi \delta_{0}\text{ \  \ in \ }E_{\tau}. \label{511-1}%
\end{equation}
It was proved in \cite{CLW} that the mean field equation (\ref{511-1}) has a solution in
$E_{\tau}$ if and only if $G_{2}(\cdot \cdot|\tau)$ has a non-trivial critical
point. Therefore, it is important to study the degeneracy of $G_{2}$ at
trivial critical points. Denote $\{i,j,k\}=\{1,2,3\}$ and note that%
\[
2e_{i}e_{j}+e_{k}^{2}-3e_{k}\eta_{1}=3e_{k}^{2}-3e_{k}\eta_{1}-\frac{g_{2}}%
{2}=-f_{k,\infty}(\tau).
\]
Recall that the Hessian of $G_{2}$ at these five trivial critical points are
expressed by (see \cite[Section 5]{LW2} or Appendix \ref{TCPGF})%
\begin{align}
&  \det D^{2}G_{2}(\tfrac{1}{2}\omega_{i},\tfrac{1}{2}\omega_{j}%
|\tau)\label{SZ-6}\\
=  &  \frac{4}{(2\pi)^{4}}\left(  |2e_{i}e_{j}+e_{k}^{2}-3e_{k}\eta_{1}%
|^{2}+\frac{2\pi}{\operatorname{Im}\tau}\operatorname{Re}\left(  3\bar{e}%
_{k}(2e_{i}e_{j}+e_{k}^{2}-3e_{k}\eta_{1})\right)  \right)  ,\nonumber
\end{align}%
\begin{align}
&  \det D^{2}G_{2}(q_{\pm},-q_{\pm}|\tau)\label{SZ-7}\\
=  &  \frac{9}{\pi^{4}}|\wp(q_{\pm})|^{2}\left(  |\wp(q_{\pm})+\eta_{1}%
|^{2}-\frac{2\pi}{\operatorname{Im}\tau}\operatorname{Re}(\wp(q_{\pm}%
)+\eta_{1})\right)  .\nonumber
\end{align}
Therefore,{\allowdisplaybreaks%
\begin{align*}
\det D^{2}G_{2}(\tfrac{1}{2}\omega_{i},\tfrac{1}{2}\omega_{j}|\tau)  &
=\frac{4}{(2\pi)^{4}}\left(  |f_{k,\infty}(\tau)|^{2}-\frac{6\pi
}{\operatorname{Im}\tau}\operatorname{Re}\left(  \bar{e}_{k}f_{k,\infty}%
(\tau)\right)  \right) \\
&  =\frac{4|f_{k,\infty}(\tau)|^{2}}{(2\pi)^{4}\operatorname{Im}\tau
}\operatorname{Im}\left(  \tau-\frac{6\pi ie_{k}(\tau)}{f_{k,\infty}(\tau
)}\right) \\
&  =\frac{4|f_{k,\infty}(\tau)|^{2}}{(2\pi)^{4}\operatorname{Im}\tau
}\operatorname{Im}\phi_{k}(\tau),
\end{align*}
}and{\allowdisplaybreaks%
\begin{align*}
&  \det D^{2}G_{2}(q_{\pm},-q_{\pm}|\tau)\\
=  &  \frac{9}{\pi^{4}\operatorname{Im}\tau}|\wp(q_{\pm})|^{2}|\wp(q_{\pm
})+\eta_{1}|^{2}\operatorname{Im}\left(  \tau-\frac{2\pi i}{\wp(q_{\pm}%
)+\eta_{1}}\right) \\
=  &  \frac{3|g_{2}(\tau)|}{4\pi^{4}\operatorname{Im}\tau}|\wp(q_{\pm}%
)+\eta_{1}|^{2}\operatorname{Im}\phi_{\pm}(\tau),
\end{align*}
}where $\phi_{\pm}(\tau)$, $\phi_{k}(\tau)$ are precisely those functions
defined in (\ref{III-35-2})-(\ref{III-36}). Thus the local one-to-one of
$\phi_{\pm}(\tau)$ and $\phi_{k}(\tau)$ in Theorem \ref{local-1-1} are
important for studying the smoothness of the curve in $\mathbb{H}$ where the
corresponding trivial critical point is a degenerate critical point of
$G_{2}(\cdot, \cdot|\tau)$. Recall the following sets defined in Section 1:%
\begin{equation}
C_{i,j}:=\left \{  \tau|\det D^{2}G_{2}(\tfrac{1}{2}\omega_{i},\tfrac{1}%
{2}\omega_{j}|\tau)=0\right \}  , \label{III-41}%
\end{equation}%
\begin{equation}
C_{\pm}:=\left \{  \tau|\det D^{2}G_{2}(q_{\pm},-q_{\pm}|\tau)=0\right \}  ,
\label{III-42}%
\end{equation}%
\begin{align}
\tilde{C}_{\pm}  &  :=\left \{  \tau \left \vert |\wp(q_{\pm})+\eta_{1}%
|^{2}\operatorname{Im}\phi_{\pm}(\tau)=0\right.  \right \} \label{III-44}\\
&  =\left \{  \tau \left \vert |\wp(q_{\pm})+\eta_{1}|^{2}-\tfrac{2\pi
}{\operatorname{Im}\tau}\operatorname{Re}(\wp(q_{\pm})+\eta_{1})=0\right.
\right \}  .\nonumber
\end{align}

\begin{theorem}
\label{SZ-thm3-copy}Recall $C_{i,j}$, $C_{\pm}$ and $\tilde{C}_{\pm}$ defined
in (\ref{III-41})-(\ref{III-44}).

\begin{itemize}
\item[(i)] For $\{i,j,k\}=\{1,2,3\}$, the curve $C_{i,j}$ is smooth.

\item[(ii)] the set $C_{\pm}$ is a disjoint union of $\tilde{C}_{\pm}$ and
$\mathfrak{S}=\{ \tau|g_{2}(\tau)=0\}$, i.e., $C_{\pm}=\tilde{C}_{\pm}%
\sqcup \mathfrak{S}$. Furthermore, the curve $\tilde{C}_{\pm}$ is smooth.
\end{itemize}
\end{theorem}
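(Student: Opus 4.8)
The plan is to realise every degeneracy curve as the preimage of the ``real circle'' $\mathbb{R}\cup\{\infty\}$ inside the Riemann sphere $\mathbb{C}\cup\{\infty\}$ under one of the locally one-to-one holomorphic maps furnished by Theorem \ref{local-1-1}, and then to invoke the elementary principle that a local biholomorphism pulls a smooth embedded curve back to a smooth embedded curve. The single analytic input behind this principle is the Cauchy--Riemann identity $|\nabla \operatorname{Im}\phi|=|\phi'|$: for a holomorphic $\phi=\operatorname{Re}\phi+i\operatorname{Im}\phi$ one has $\nabla(\operatorname{Im}\phi)\neq 0$ exactly when $\phi'\neq 0$, so that $\{\operatorname{Im}\phi=0\}$ is locally a smooth curve by the implicit function theorem.

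For part (i), I would first note that the Hessian (\ref{SZ-6}) is real-analytic on all of $\mathbb{H}$ and vanishes precisely when $f_{k,\infty}(\tau)=0$ or, at points where $f_{k,\infty}(\tau)\neq 0$, when $\operatorname{Im}\phi_k(\tau)=0$. Since the zeros of $f_{k,\infty}$ are exactly the poles of $\phi_k$ (there $e_k\neq 0$, so $\phi_k=\infty$) and (\ref{SZ-6}) does vanish at them, this identifies $C_{i,j}=\phi_k^{-1}(\mathbb{R}\cup\{\infty\})$. At a point where $\phi_k$ is finite I apply the identity above together with $\phi_k'\neq 0$ from Theorem \ref{local-1-1}(iii); at a pole $\tau_0$ I pass to $g:=1/\phi_k$, which by the local injectivity of $\phi_k$ into $\mathbb{C}\cup\{\infty\}$ asserted in Theorem \ref{local-1-1}(iii) is holomorphic near $\tau_0$ with $g(\tau_0)=0$ and $g'(\tau_0)\neq 0$, so that $C_{i,j}=\{\operatorname{Im}g=0\}$ near $\tau_0$, again smooth.

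For part (ii) the same mechanism applies to $\phi_\pm$ via (\ref{SZ-7}), but two preliminary points are needed. First, factoring (\ref{SZ-7}) exhibits $\det D^2G_2(q_\pm,-q_\pm|\tau)$ as $|g_2|$ (vanishing exactly on $\mathfrak{S}$) times $|\wp(q_\pm)+\eta_1|^2\operatorname{Im}\phi_\pm$ (vanishing exactly on $\tilde{C}_\pm$), whence $C_\pm=\tilde{C}_\pm\cup\mathfrak{S}$; the disjointness follows from (\ref{III-40}), which gives $\phi_\pm(\tau)=\eta_2/\eta_1\notin\mathbb{R}\cup\{\infty\}$ and $\wp(q_\pm)+\eta_1=\eta_1\neq 0$ for $\tau\in\mathfrak{S}$, so that $\mathfrak{S}\cap\tilde{C}_\pm=\emptyset$. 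In particular $\tilde{C}_\pm\subset\mathbb{H}\backslash\mathfrak{S}$, which is exactly where $\phi_\pm$ is single-valued and locally one-to-one; a short continuity argument (both branches of $\phi_\pm$ stay near the non-real value $\eta_2/\eta_1$ in a neighborhood of $\mathfrak{S}$) shows that $\tilde{C}_\pm$ keeps a positive distance from $\mathfrak{S}$. On $\mathbb{H}\backslash\mathfrak{S}$ one then has $\tilde{C}_\pm=\phi_\pm^{-1}(\mathbb{R}\cup\{\infty\})$, and the finite-point/pole-point dichotomy of part (i)---now using $\phi_\pm'\neq 0$ from Theorem \ref{local-1-1}(ii)---delivers smoothness.

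The one genuinely delicate point is the behaviour at the poles of $\phi_k$ and $\phi_\pm$: there the factored Hessian is an indeterminate $0\cdot\infty$, and the naive defining function $\operatorname{Im}\phi$ blows up. The remedy is to read the curve in the chart of $\mathbb{C}\cup\{\infty\}$ at $\infty$, i.e.\ to replace $\operatorname{Im}\phi$ by $\operatorname{Im}(1/\phi)$; this is legitimate precisely because Theorem \ref{local-1-1} asserts that $\phi_k$ and $\phi_\pm$ are local biholomorphisms into the full sphere $\mathbb{C}\cup\{\infty\}$, not merely into $\mathbb{C}$. Everything else reduces to the implicit function theorem together with the Cauchy--Riemann identity $|\nabla\operatorname{Im}\phi|=|\phi'|$.
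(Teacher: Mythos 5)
Your proposal is correct, and its skeleton coincides with the paper's proof: both use the factorizations of the Hessians (\ref{SZ-6})--(\ref{SZ-7}) to identify the degeneracy sets in terms of $f_{k,\infty}$, $\varphi:=\wp(q_{\pm})+\eta_{1}$ and $\phi_{k}$, $\phi_{\pm}$; both get the disjointness $C_{\pm}=\tilde{C}_{\pm}\sqcup\mathfrak{S}$ from (\ref{III-40}); and both deduce smoothness from Theorem \ref{local-1-1} together with the implicit function theorem, using the Cauchy--Riemann identity $|\nabla\operatorname{Im}\phi|=|\phi^{\prime}|$ at points where $\phi_{k}$ (resp. $\phi_{\pm}$) is finite. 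The one place where you genuinely depart from the paper is at the poles, i.e. at the zeros of $f_{k,\infty}$ (resp. of $\varphi$). There the paper does not pass to the sphere: writing $\tau=a+bi$, it keeps the original real-analytic defining functions $H=\frac{4}{(2\pi)^{4}}\left(|f_{k,\infty}|^{2}-\frac{6\pi}{b}\operatorname{Re}(\bar{e}_{k}f_{k,\infty})\right)$, resp. $H_{+}=|\varphi|^{2}-\frac{2\pi}{b}\operatorname{Re}\varphi$, and computes $\partial H/\partial a$ and $\partial H/\partial b$ explicitly at such points; since the quadratic terms have vanishing gradient there, $\nabla H$ reduces to the real and imaginary parts of $\bar{e}_{k}f_{k,\infty}^{\prime}$ (resp. $\varphi^{\prime}$), which are nonzero by the simple zero property of Theorem \ref{local-1-1}(i). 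Your route instead identifies the curve globally as $\phi^{-1}(\mathbb{R}\cup\{\infty\})$ and reads it near a pole in the chart $1/\phi$, using that the pole is simple (equivalently, the local injectivity into $\mathbb{C}\cup\{\infty\}$ asserted in Theorem \ref{local-1-1}). The two arguments consume exactly the same input---simplicity of the zero of $f_{k,\infty}$ (resp. $\varphi$) plus nonvanishing of the numerator $6\pi ie_{k}$ (resp. $2\pi i$)---so neither is logically stronger, but yours is more uniform (a single mechanism, pullback of the circle $\mathbb{R}\cup\{\infty\}$ under a local biholomorphism to the sphere, handles both cases), whereas the paper's is more pedestrian and has the minor virtue of exhibiting the Hessian itself as a defining function with nonvanishing gradient at those points. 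One small caveat: your claim that $\tilde{C}_{\pm}$ ``keeps a positive distance from $\mathfrak{S}$'' is unnecessary and, read as a global statement, not obviously true (both sets may accumulate on $\partial\mathbb{H}$); all you actually need---and all that disjointness plus openness of $\mathbb{H}\backslash\mathfrak{S}$ gives---is that each point of $\tilde{C}_{\pm}$ has a neighborhood in $\mathbb{H}\backslash\mathfrak{S}$ on which one branch of $\phi_{\pm}$ is defined, which suffices because smoothness is a local property.
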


\begin{proof}
Write $\tau=a+bi$ with $a,b\in \mathbb{R}$.

(i) Fix $i,j,k$ and denote $H(\tau)=\det D^{2}G_{2}(\tfrac{1}{2}\omega
_{i},\tfrac{1}{2}\omega_{j}|\tau)$. Recall%
\[
H(\tau)=\frac{4}{(2\pi)^{4}}\left(  |f_{k,\infty}(\tau)|^{2}-\frac{6\pi}%
{b}\operatorname{Re}\left(  \bar{e}_{k}f_{k,\infty}(\tau)\right)  \right)  .
\]
For $\tau \in C_{i,j}$ such that $f_{k,\infty}(\tau)=0$, it follows from the
expression of $f_{k,\infty}=3e_{k}\eta_{1}+\frac{g_{2}}{2}-3e_{k}^{2}$ that
$e_{k}(\tau)\not =0$. Furthermore, Theorem \ref{local-1-1} says $f_{k,\infty
}^{\prime}(\tau)\not =0$, so%
\begin{align*}
\bar{e}_{k}(\tau)f_{k,\infty}^{\prime}(\tau)=  &  \left(  \operatorname{Re}%
e_{k}\operatorname{Re}f_{k,\infty}^{\prime}+\operatorname{Im}e_{k}%
\operatorname{Im}f_{k,\infty}^{\prime}\right)  (\tau)\\
&  -i\left(  \operatorname{Im}e_{k}\operatorname{Re}f_{k,\infty}^{\prime
}-\operatorname{Re}e_{k}\operatorname{Im}f_{k,\infty}^{\prime}\right)
(\tau)\not =0.
\end{align*}
Since%
\[
\frac{\partial H(\tau)}{\partial a}=-\frac{24\pi}{(2\pi)^{4}b}\left(
\operatorname{Re}e_{k}\operatorname{Re}f_{k,\infty}^{\prime}+\operatorname{Im}%
e_{k}\operatorname{Im}f_{k,\infty}^{\prime}\right)  (\tau),
\]%
\[
\frac{\partial H(\tau)}{\partial b}=-\frac{24\pi}{(2\pi)^{4}b}\left(
\operatorname{Im}e_{k}\operatorname{Re}f_{k,\infty}^{\prime}-\operatorname{Re}%
e_{k}\operatorname{Im}f_{k,\infty}^{\prime}\right)  (\tau),
\]
we obtain $\nabla H(\tau)\not =0$, namely $C_{i,j}$ is smooth near such $\tau$.

For $\tau \in C_{i,j}$ such that $f_{k,\infty}(\tau)\not =0$, we see from%
\[
H(\tau)=\frac{4|f_{k,\infty}(\tau)|^{2}}{(2\pi)^{4}b}\operatorname{Im}\phi
_{k}(\tau)
\]
that $C_{i,j}$ is defined by $\operatorname{Im}\phi_{k}=0$ near such $\tau$.
Since Theorem \ref{local-1-1} says $\phi_{k}^{\prime}(\tau)\not =0$ and%
\[
\frac{\partial \operatorname{Im}\phi_{k}}{\partial a}=\operatorname{Im}\phi
_{k}^{\prime},\text{ \  \ }\frac{\partial \operatorname{Im}\phi_{k}}{\partial
b}=\operatorname{Re}\phi_{k}^{\prime},
\]
we conclude that $C_{i,j}$ is also smooth near such $\tau$. This proves (i).

(ii) We consider the set $C_{+}$. Denote $\varphi(\tau)=\wp(q_{+})+\eta_{1}$
and
\begin{equation}
H_{+}(\tau)=|\varphi(\tau)|^{2}-\frac{2\pi}{b}\operatorname{Re}\varphi
(\tau)=\frac{1}{b}|\varphi(\tau)|^{2}\operatorname{Im}\phi_{+}(\tau).
\label{III-46}%
\end{equation}
Recalling $\wp(q_{+})=\sqrt{g_{2}/12}$, we have%
\[
\det D^{2}G_{2}(q_{+},-q_{+}|\tau)=\frac{3|g_{2}(\tau)|}{4\pi^{4}}H_{+}(\tau),
\]
which gives $C_{+}=\tilde{C}_{+}\cup \mathfrak{S}$. If $\tau \in \mathfrak{S}$,
i.e. $g_{2}(\tau)=0$, then (\ref{III-40}) shows $\operatorname{Im}\phi
_{+}(\tau)\not =0$ and $\varphi(\tau)=\eta_{1}(\tau)\not =0$, which implies
$\tau \not \in \tilde{C}_{+}$. Thus $\tilde{C}_{+}\cap \mathfrak{S}=\emptyset$,
i.e. $C_{+}=\tilde{C}_{+}\sqcup \mathfrak{S}$.

It suffices to prove that the curve $\tilde{C}_{+}$ is smooth. For those
$\tau \in \tilde{C}_{+}$ such that $\varphi(\tau)=0$, Theorem \ref{local-1-1}%
-(i) (use $f_{0,\infty}=12\eta_{1}^{2}-g_{2}$ and $g_{2}(\tau)\not =0$) implies
$\varphi^{\prime}(\tau)\not =0$. Since%
\[
\frac{\partial H_{+}(\tau)}{\partial a}=-\frac{2\pi}{b}\operatorname{Re}%
\varphi^{\prime}(\tau),\text{ \ }\frac{\partial H_{+}(\tau)}{\partial b}%
=\frac{2\pi}{b}\operatorname{Im}\varphi^{\prime}(\tau),
\]
we obtain $\nabla H_{+}(\tau)\not =0$, namely $\tilde{C}_{+}$ is smooth near
such $\tau$.

For those $\tau \in \tilde{C}_{+}$ such that $\varphi(\tau)\not =0$, we see from
(\ref{III-46}) that $\tilde{C}_{+}$ is defined by $\operatorname{Im}\phi
_{+}=0$ near such $\tau$. Since Theorem \ref{local-1-1} says $\phi_{+}%
^{\prime}(\tau)\not =0$ and%
\[
\frac{\partial \operatorname{Im}\phi_{+}}{\partial a}=\operatorname{Im}\phi
_{+}^{\prime},\text{ \  \ }\frac{\partial \operatorname{Im}\phi_{+}}{\partial
b}=\operatorname{Re}\phi_{+}^{\prime},
\]
we conclude that $\tilde{C}_{+}$ is also smooth near such $\tau$. The proof of
$C_{-}$ is similar.

The proof is complete.
\end{proof}

\appendix

\section{Trivial critical points of Green function $G_{2}$}

\label{TCPGF}

In this appendix, we prove that $G_{2}$ has five trivial critical points
$\{(\frac{1}{2}\omega_{i},\frac{1}{2}\omega_{j})$ $|$ $i\not =j\}$ and
$\{(q_{\pm},-q_{\pm})$ $|$ $\wp(q_{\pm})=\pm \sqrt{g_{2}/12}\}$ and compute the
Hessian. A critical point $(a_{1},a_{2})$ of $G_{2}$ satisfies%
\[
2\nabla G(a_{1}|\tau)=\nabla G(a_{1}-a_{2}|\tau),\text{ \ }2\nabla
G(a_{2}|\tau)=\nabla G(a_{2}-a_{1}|\tau),
\]
which is equivalent to%
\begin{equation}
\nabla G(a_{1}|\tau)+\nabla G(a_{2}|\tau)=0,\text{ \ }2\nabla G(a_{1}%
|\tau)-\nabla G(a_{1}-a_{2}|\tau)=0. \label{SZ-5}%
\end{equation}

\begin{proposition}
Green function $G_{2}(z_{1},z_{2}|\tau)$ has precisely five trivial critical
points $\{(\frac{1}{2}\omega_{i},\frac{1}{2}\omega_{j})$ $|$ $i\not =j\}$ and
$\{(q_{\pm},-q_{\pm})$ $|$ $\wp(q_{\pm})=\pm \sqrt{g_{2}/12}\}$.
\end{proposition}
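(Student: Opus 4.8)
The plan is to convert the critical point system into identities for elliptic functions. First I would recall the standard representation of the gradient of the Green function (see \cite{LW}): writing $z=r+s\tau$ with $r,s\in\mathbb{R}$, one has
\[
\frac{\partial G}{\partial z}(z|\tau)=-\frac{1}{4\pi}Z(z),\qquad Z(z):=\zeta(z|\tau)-r\eta_{1}(\tau)-s\eta_{2}(\tau),
\]
so that $\nabla G(a)=0$ if and only if $Z(a)=0$. The function $Z$ is readily checked to be $\Lambda_{\tau}$-periodic and odd; in particular $Z$ vanishes at every half-period $\tfrac{1}{2}\omega_{k}$, because $Z(\tfrac{1}{2}\omega_{k})=-Z(-\tfrac{1}{2}\omega_{k})=-Z(\tfrac{1}{2}\omega_{k})$ by oddness and periodicity. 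Since $\nabla G$ depends $\mathbb{R}$-linearly on the values of $Z$, the equivalent system (\ref{SZ-5}) becomes
\[
Z(a_{1})+Z(a_{2})=0,\qquad 2Z(a_{1})=Z(a_{1}-a_{2}).
\]

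Next, the trivialness condition $\{a_{1},a_{2}\}=\{-a_{1},-a_{2}\}$ in $E_{\tau}$ splits into two cases, since $a_{1}\neq a_{2}$ off the diagonal: either (A) $a_{1}=-a_{1}$ and $a_{2}=-a_{2}$, so that $a_{1},a_{2}$ are two distinct half-periods; or (B) $a_{2}=-a_{1}$, i.e. $(a_{1},a_{2})=(q,-q)$ with $q\notin\tfrac{1}{2}\Lambda_{\tau}$. In case (A), both $Z(a_{1})$ and $Z(a_{2})$ vanish, so the first equation is automatic and the second reduces to $Z(a_{1}-a_{2})=0$. Here I would use that the difference of two distinct half-periods is again a nonzero half-period modulo $\Lambda_{\tau}$ (for instance $\tfrac{1}{2}\omega_{1}-\tfrac{1}{2}\omega_{2}\equiv\tfrac{1}{2}\omega_{3}$), whence $Z(a_{1}-a_{2})=0$ as well. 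Thus every pair $(\tfrac{1}{2}\omega_{i},\tfrac{1}{2}\omega_{j})$ with $i\neq j$ is a critical point, accounting for three trivial critical points.

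In case (B) the first equation is automatic by oddness of $Z$, and the second becomes $2Z(q)=Z(2q)$. The decisive point is that the linear terms cancel: writing $q=r+s\tau$, the real coordinates of $2q$ are $(2r,2s)$, so
\[
Z(2q)-2Z(q)=\zeta(2q)-2\zeta(q).
\]
I would then invoke the duplication formula $\zeta(2q)-2\zeta(q)=\tfrac{1}{2}\,\wp''(q)/\wp'(q)$; as $q$ is not a half-period, $\wp'(q)\neq0$, so the equation is equivalent to $\wp''(q)=0$. Since $\wp''=6\wp^{2}-\tfrac{g_{2}}{2}$, this means $\wp(q)^{2}=g_{2}/12$, i.e. $\wp(q)=\pm\sqrt{g_{2}/12}$, which are exactly the relations defining $q_{\pm}$. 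As $(q,-q)$ and $(-q,q)$ represent the same critical point and each admissible value of $\wp(q)$ corresponds to the single pair $\{q,-q\}$, case (B) contributes precisely the two points $(q_{\pm},-q_{\pm})$.

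Adding the two cases yields exactly $3+2=5$ trivial critical points. The routine verifications I would still carry out are that cases (A) and (B) exhaust the set equality $\{a_{1},a_{2}\}=\{-a_{1},-a_{2}\}$, and that the identity $Z(2q)-2Z(q)=\zeta(2q)-2\zeta(q)$ holds once $Z$ is treated as a genuine $\Lambda_{\tau}$-periodic function. I expect the main obstacle to lie in case (B): one must confirm that the duplication-formula reduction is legitimate (i.e. $\wp'(q)\neq0$ on the relevant locus) and that the zero set of $\wp''$ contributes exactly the right number of distinct critical points, so that no spurious coincidences inflate or collapse the count.
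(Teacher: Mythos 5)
Your proposal is correct and follows essentially the same route as the paper's proof: reduce the critical point system via $-4\pi G_z(z|\tau)=\zeta(z|\tau)-r\eta_1-s\eta_2$, split the trivialness condition into the half-period case and the case $a_2=-a_1$, and use the duplication formula $\zeta(2q)-2\zeta(q)=\wp''(q)/(2\wp'(q))$ to reduce to $\wp''(q)=0$, i.e. $\wp(q)=\pm\sqrt{g_2/12}$. Your write-up is in fact slightly more careful than the paper's (explicitly checking that the half-period pairs satisfy the equations, that the linear terms in $Z(2q)-2Z(q)$ cancel, and that $\wp'(q)\neq 0$), but the underlying argument is identical.
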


\begin{proof}
Let $(a_{1},a_{2})$ be a trivial critical point, i.e.%
\[
\{a_{1},a_{2}\}=\{-a_{1},-a_{2}\} \text{ \ in \ }E_{\tau}\text{.}%
\]
If $a_{1}=-a_{1}$ and $a_{2}=-a_{2}$ in $E_{\tau}$, then $a_{k}$ are both half
periods, i.e. $(a_{1},a_{2})\in \{(\frac{1}{2}\omega_{i},\frac{1}{2}\omega
_{j})$ $|$ $i\not =j\}$. It suffices to consider the case $a_{1}=-a_{2}$ in
$E_{\tau}$. Recall from \cite{LW} that%
\begin{equation}
-4\pi G_{z}(z|\tau)=\zeta(z|\tau)-r\eta_{1}(\tau)-s\eta_{2}(\tau),
\label{SZ-8}%
\end{equation}
where $z=r+s\tau$ with $r,s\in \mathbb{R}$. Therefore, the second equation in
(\ref{SZ-5}) becomes%
\[
2\zeta(a_{1}|\tau)-\zeta(2a_{1}|\tau)=0.
\]
Together with the addition formula $2\zeta(a_{1}|\tau)-\zeta(2a_{1}%
|\tau)=\frac{-\wp^{\prime \prime}(a_{1}|\tau)}{2\wp^{\prime}(a_{1}|\tau)}$, we
obtain%
\[
6\wp(a_{1}|\tau)^{2}-g_{2}/2=\wp^{\prime \prime}(a_{1}|\tau)=0,
\]
i.e. $\wp(a_{1}|\tau)=\pm \sqrt{g_{2}/12}$. The proof is complete.
\end{proof}

Now we compute the Hessian of $G_{2}$ at these five trivial critical points.

\begin{proposition}
Let $\{i,j,k\}=\{1,2,3\}$. Then%
\begin{align}
&  \det D^{2}G_{2}(\tfrac{1}{2}\omega_{i},\tfrac{1}{2}\omega_{j}%
|\tau)\label{SZ-9}\\
=  &  \frac{4}{(2\pi)^{4}}\left(  |2e_{i}e_{j}+e_{k}^{2}-3e_{k}\eta_{1}%
|^{2}+\frac{2\pi}{b}\operatorname{Re}\left(  3\bar{e}_{k}(2e_{i}e_{j}%
+e_{k}^{2}-3e_{k}\eta_{1})\right)  \right)  .\nonumber
\end{align}%
\begin{align*}
&  \det D^{2}G_{2}(q_{\pm},-q_{\pm}|\tau)\\
&  =\frac{9}{\pi^{4}}|\wp(q_{\pm})|^{2}\left(  |\wp(q_{\pm})+\eta_{1}%
|^{2}-\frac{2\pi}{\operatorname{Im}\tau}\operatorname{Re}(\wp(q_{\pm}%
)+\eta_{1})\right)  .
\end{align*}

\end{proposition}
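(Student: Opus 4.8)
The plan is to compute the $4\times4$ real Hessian through Wirtinger (complex) derivatives, which replaces the unwieldy real second-derivative bookkeeping by a clean block-matrix determinant. Writing $z_m=x_m+iy_m$ and passing from the real coordinates $(x_1,y_1,x_2,y_2)$ to $\zeta=(z_1,z_2,\bar z_1,\bar z_2)$ via the linear map $T_{a,i}=\partial\zeta_a/\partial\xi_i$, the chain rule gives $D^2G_2=T^{\top}H_{\mathbb C}T$, where $H_{\mathbb C}=(\partial^2G_2/\partial\zeta_a\partial\zeta_b)$ is the complex Hessian. Since $(\det T)^2=16$, I would record the reduction
\[
\det D^2G_2=16\det H_{\mathbb C}.
\]

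First I would differentiate $(\ref{SZ-8})$, i.e. $G_z=-\tfrac1{4\pi}(\zeta(z)-r\eta_1-s\eta_2)$, using that $z=r+s\tau$ depends $\mathbb R$-linearly on $(z,\bar z)$ through $s=(z-\bar z)/(\tau-\bar\tau)$. With $\zeta'=-\wp$ and the Legendre relation $\tau\eta_1-\eta_2=2\pi i$, a short computation yields, off the lattice,
\[
G_{zz}(z)=\frac1{4\pi}\Big(\wp(z)+\eta_1-\frac{\pi}{b}\Big),\qquad G_{z\bar z}(z)=\frac1{4b},\qquad b:=\operatorname{Im}\tau.
\]
Setting $w=z_1-z_2$ and $a_1=G_{zz}(z_1)$, $a_2=G_{zz}(z_2)$, $a_0=G_{zz}(w)$, the pure $z_mz_n$-block of $G_2$ is $P=\left(\begin{smallmatrix}a_0-2a_1&-a_0\\-a_0&a_0-2a_2\end{smallmatrix}\right)$, while the mixed block is $Q=-\tfrac1{4b}J$ with $J=\mathbf1\mathbf1^{\top}$ the all-ones $2\times2$ matrix, so that $H_{\mathbb C}=\left(\begin{smallmatrix}P&-\frac1{4b}J\\-\frac1{4b}J&\bar P\end{smallmatrix}\right)$.

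The key simplification is that $J$ has rank one. Taking the Schur complement in $\bar P$ and using $J\bar P^{-1}J=(\mathbf1^{\top}\bar P^{-1}\mathbf1)J$ together with the matrix determinant lemma, the $4\times4$ determinant collapses to the polynomial identity (valid for all $P$, hence no invertibility issue)
\[
\det D^2G_2=16|\det P|^2-\frac1{b^2}\,|S|^2,\qquad S:=4a_0-2a_1-2a_2,
\]
where $S=\mathbf1^{\top}\operatorname{adj}(P)\,\mathbf1$ is the sum of the entries of the adjugate of $P$. I would then specialize this master formula to the two families. For $(\tfrac12\omega_i,\tfrac12\omega_j)$ one has $\wp(z_1)=e_i$, $\wp(z_2)=e_j$ and $\wp(w)=e_k$ (since $\tfrac12(\omega_i-\omega_j)\equiv\tfrac12\omega_k$); the shift $\eta_1-\pi/b$ then cancels in $S$, giving $S=\tfrac{3}{2\pi}e_k$, while $e_i+e_j=-e_k$ and $\tfrac{g_2}{4}=e_k^2-e_ie_j$ reduce $\det P$ to $\tfrac1{8\pi^2}\big(2e_ie_j+e_k^2-3e_k\eta_1+3\pi e_k/b\big)$. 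For $(q_\pm,-q_\pm)$ the crucial input is $\wp''(q_\pm)=6\wp(q_\pm)^2-\tfrac{g_2}{2}=0$, so the duplication formula forces $\wp(2q_\pm)=-2\wp(q_\pm)$; this yields $S=-\tfrac{3}{\pi}\wp(q_\pm)$ and $\det P=\tfrac{3\wp(q_\pm)}{4\pi^2}\big(\wp(q_\pm)+\eta_1-\pi/b\big)$. In each case, expanding $16|\det P|^2$ produces a $\pi^2/b^2$ term that exactly cancels $b^{-2}|S|^2$, leaving precisely the two asserted expressions. The hard part is purely this closing algebra — tracking the real and imaginary cross terms and engineering the cancellation of the $1/b^2$ pieces — but the rank-one reduction turns it into a $2\times2$ computation rather than a genuine $4\times4$ determinant, which is exactly what keeps it tractable.
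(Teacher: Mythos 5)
Your proposal is correct, and its route through the determinant is genuinely different from the paper's. The paper stays in real coordinates: from (\ref{SZ-8}) it writes out the full $4\times 4$ real Hessian explicitly (in the variables $u_m+iv_m=-(e_m+\eta_1)$, resp.\ $\wp(q)=u+iv$, $\eta_1=s+ti$) and then asserts the two determinant identities after ``a lengthy yet straightforward calculation'' whose details are omitted, recording only the sanity check on $\tau\in i\mathbb{R}^{+}$ where all imaginary parts vanish. You instead work in Wirtinger coordinates, and all of your ingredients check out: $G_{zz}=\frac{1}{4\pi}\bigl(\wp(z)+\eta_1-\frac{\pi}{b}\bigr)$ and $G_{z\bar z}=\frac{1}{4b}$ follow from (\ref{SZ-8}) and the Legendre relation (the latter is also forced by $-\Delta G=\delta_0-1/|E_\tau|$ with $|E_\tau|=b$); the mixed block of the complex Hessian is indeed the rank-one matrix $-\frac{1}{4b}J$; and $(\det T)^2=16$. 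The Schur complement plus matrix determinant lemma then give the master identity $\det D^2G_2=16|\det P|^2-|S|^2/b^2$ with $S=4a_0-2a_1-2a_2$, which treats both families of trivial critical points uniformly. I verified your specializations: for $(\tfrac12\omega_i,\tfrac12\omega_j)$ one gets $S=\frac{3e_k}{2\pi}$ (the $\eta_1-\pi/b$ shift drops out of $S$ because $4-2-2=0$) and $\det P=\frac{1}{8\pi^2}\bigl(2e_ie_j+e_k^2-3e_k\eta_1+\frac{3\pi e_k}{b}\bigr)$, while for $(q_\pm,-q_\pm)$ the inputs $\wp''(q_\pm)=0$ and $\wp(2q_\pm)=-2\wp(q_\pm)$ give $S=-\frac{3\wp(q_\pm)}{\pi}$ and $\det P=\frac{3\wp(q_\pm)}{4\pi^2}\bigl(\wp(q_\pm)+\eta_1-\frac{\pi}{b}\bigr)$; in both cases the $\pi^2/b^2$ term in $16|\det P|^2$ cancels exactly against $|S|^2/b^2$, yielding (\ref{SZ-9}) and the $q_\pm$ formula. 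What your approach buys is that it actually supplies the computation the paper leaves out, and in a structural way: the $4\times 4$ determinant collapses to $2\times 2$ algebra, and the cancellation of the $1/b^2$ pieces is a visible consequence of the rank-one mixed block rather than an accident emerging from a long expansion. What the paper's brute-force version buys is that it requires no complex-Hessian formalism and each matrix entry can be checked directly, which is presumably why the authors found the rectangular-torus verification sufficient to record.
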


\begin{proof}
The following argument is borrowed from \cite[Section 5]{LW2}. First we consider
$p=(\frac{1}{2}\omega_{i},\frac{1}{2}\omega_{j})$ where $\{i,j,k\}=\{1,2,3\}$.
Notice that $\wp(\frac{1}{2}\omega_{i}-\frac{1}{2}\omega_{j})=\wp(\frac{1}%
{2}\omega_{k})=e_{k}$. For simplicity we write%
\[
\tau=a+bi\text{ and }-(e_{k}+\eta_{1})=u_{k}+v_{k}i,
\]
and similarly for the indices $i$, $j$, where $a,b,u_{k},v_{k}$ are all real.
Then by (\ref{SZ-8}), a straightforward computation gives%
\[
D^{2}G_{2}(p)=\frac{1}{2\pi}%
\begin{pmatrix}
2u_{i}-u_{k} & v_{k}-2v_{i} & u_{k} & -v_{k}\\
v_{k}-2v_{i} & u_{k}-2u_{i}-\frac{2\pi}{b} & -v_{k} & -u_{k}-\frac{2\pi}{b}\\
u_{k} & -v_{k} & 2u_{j}-u_{k} & v_{k}-2v_{j}\\
-v_{k} & -u_{k}-\frac{2\pi}{b} & v_{k}-2v_{j} & u_{k}-2u_{j}-\frac{2\pi}{b}%
\end{pmatrix}
.
\]
A lengthy yet straightforward calculation shows (\ref{SZ-9}).\ The details
will be omit here. We only note that when $\tau \in i\mathbb{R}^{+}$, all
$e_{i}$'s and $\eta_{1}$ are real numbers. Thus all the imaginary parts
vanish: $v_{1}=v_{2}=v_{3}=0$. In this case (\ref{SZ-9}) can be verified easily.

Next we consider $p=(q,-q)$ with $q\in \{q_{+},q_{-}\}$. Let $\mu=\wp(q)=u+iv$
with $u,v$ real. Since $\wp^{\prime \prime}(q)=0$, we have $\wp(2q)=-2\wp
(q)=-2\mu$ by the addition formula. Denote $\eta_{1}=s+ti$ with $s,t$ real.
Then we have%
\[
D^{2}G_{2}(p)=\frac{1}{2\pi}%
\begin{pmatrix}
-4u-s & 4v+t & 2u-s & -2v+t\\
4v+t & 4u+s-\frac{2\pi}{b} & -2v+t & -2u+s-\frac{2\pi}{b}\\
2u-s & -2v+t & -4u-s & 4v+t\\
-2v+t & -2u+s-\frac{2\pi}{b} & 4v+t & 4u+s-\frac{2\pi}{b}%
\end{pmatrix}
.
\]
A straightforward calculation easier than the previous case shows that%
\begin{align*}
\det D^{2}G_{2}(p)  &  =\frac{144}{(2\pi)^{4}}(u^{2}+v^{2})\left(
(u+s)^{2}+(v+t)^{2}-\frac{2\pi}{b}(u+s)\right) \\
&  =\frac{9}{\pi^{4}}|\wp(q)|^{2}\left(  |\wp(q)+\eta_{1}|^{2}-\frac{2\pi
}{\operatorname{Im}\tau}\operatorname{Re}(\wp(q)+\eta_{1})\right)  .
\end{align*}
The proof is complete.
\end{proof}

\section{Derivative with respect to $\tau$}

\label{DWRT}

In this appendix, as mentioned in Section 1, we prove the simple zero property
of $F_{k}(\tau)$ and $f_{k,\infty}(\tau)$ by taking derivative directly. Then
we explain why this approach can not work for $f_{k,C}(\tau)$ with
$C\not =\infty$.

We need to use the following formulas (cf. \cite{YB} or \cite[Lemma
2.3]{Chen-Kuo-Lin}):%
\[
\frac{\partial}{\partial \tau}\wp(z|\tau)=\frac{-i}{4\pi}\left[
\begin{array}
[c]{l}%
2\left(  \zeta(z|\tau)-z\eta_{1}(\tau)\right)  \wp^{\prime}(z|\tau)\\
+4\left(  \wp(z|\tau)-\eta_{1}\right)  \wp(z|\tau)-\frac{2}{3}g_{2}(\tau)
\end{array}
\right]  ,
\]%
\begin{equation}
\eta_{1}^{\prime}\left(  \tau \right)  =\frac{i}{4\pi}\left[  2\eta_{1}%
^{2}-\frac{1}{6}g_{2}(\tau)\right]  . \label{B-0}%
\end{equation}
From here, we immediately obtain%
\begin{equation}
e_{k}^{\prime}(\tau)=\frac{-i}{4\pi}\left[  4\left(  e_{k}-\eta_{1}\right)
e_{k}-\frac{2}{3}g_{2}(\tau)\right]  ,\text{ \ }k\in \{1,2,3\}. \label{B-1}%
\end{equation}
By $g_{2}=2(e_{1}^{2}+e_{2}^{2}+e_{3}^{2})$, $e_{1}+e_{2}+e_{3}=0$ and
$g_{3}=4e_{1}e_{2}e_{3}=\frac{4}{3}(e_{1}^{3}+e_{2}^{3}+e_{3}^{3})$, we easily
derive from (\ref{B-1}) that%
\begin{equation}
g_{2}^{\prime}(\tau)=\frac{-i}{\pi}[3g_{3}(\tau)-2\eta_{1}(\tau)g_{2}(\tau)],
\label{B-2}%
\end{equation}
\[
g_{3}^{\prime}(\tau)=\frac{-i}{\pi}[-3g_{3}(\tau)\eta_{1}(\tau)+\tfrac{1}{6}g_2(\tau)^{2}].
\]

\begin{proposition}
\label{P_A}For $k\in \{1,2,3\}$, $F_{k}(\tau)=\eta_{1}(\tau)+e_{k}(\tau)$
satisfies the simple zero property.
\end{proposition}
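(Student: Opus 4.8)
The plan is to prove the simple zero property by differentiating $F_{k}=\eta_{1}+e_{k}$ explicitly and then simplifying the result \emph{on} the zero set, rather than in general. First I would combine the two derivative formulas (\ref{B-0}) and (\ref{B-1}): adding $\eta_{1}^{\prime}$ and $e_{k}^{\prime}$, the $g_{2}$-coefficients combine as $-\tfrac{1}{6}+\tfrac{2}{3}=\tfrac{1}{2}$, and I expect to obtain
\[
F_{k}^{\prime}(\tau)=\frac{i}{4\pi}\left(2\eta_{1}^{2}+4\eta_{1}e_{k}-4e_{k}^{2}+\tfrac{1}{2}g_{2}\right).
\]

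Next, at any zero $\tau_{0}$ of $F_{k}$ we have $\eta_{1}(\tau_{0})=-e_{k}(\tau_{0})$. The key observation is that substituting this relation \emph{before} simplifying eliminates the transcendental quantity $\eta_{1}$ completely: the $\eta_{1}$-dependent part $2\eta_{1}^{2}+4\eta_{1}e_{k}$ becomes $2e_{k}^{2}-4e_{k}^{2}=-2e_{k}^{2}$, so the bracket collapses to $\tfrac{1}{2}g_{2}-6e_{k}^{2}$ and
\[
F_{k}^{\prime}(\tau_{0})=\frac{i}{4\pi}\left(\tfrac{g_{2}}{2}-6e_{k}^{2}\right).
\]
Thus everything reduces to showing $\tfrac{g_{2}}{2}-6e_{k}^{2}\neq0$ on $\mathbb{H}$.

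This last step is the only place requiring a genuine idea, though it is still elementary. Writing $\{i,j,k\}=\{1,2,3\}$ and using $g_{2}=2(e_{1}^{2}+e_{2}^{2}+e_{3}^{2})$ together with $e_{i}+e_{j}+e_{k}=0$, I would factor
\[
\tfrac{g_{2}}{2}-6e_{k}^{2}=e_{i}^{2}+e_{j}^{2}-5e_{k}^{2}=-2(2e_{i}+e_{j})(e_{i}+2e_{j})=-2(e_{i}-e_{k})(e_{j}-e_{k}),
\]
where the last equality uses $e_{i}+e_{j}=-e_{k}$. Since $\tau_{0}\in\mathbb{H}$, the modular discriminant $g_{2}^{3}-27g_{3}^{2}=16(e_{1}-e_{2})^{2}(e_{1}-e_{3})^{2}(e_{2}-e_{3})^{2}$ never vanishes, so $e_{1},e_{2},e_{3}$ are pairwise distinct and the factored expression is nonzero. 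Hence $F_{k}^{\prime}(\tau_{0})\neq0$, which is exactly the simple zero property.

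I do not anticipate a serious obstacle: the computation of $F_{k}^{\prime}$ is mechanical once (\ref{B-0})--(\ref{B-1}) are available, and the whole point is the bookkeeping trick of imposing $\eta_{1}=-e_{k}$ at the outset, which turns a transcendental identity into the trivial symmetric-function fact that the half-period values $e_{1},e_{2},e_{3}$ are distinct. The one thing I would double-check is the constant $\tfrac{1}{2}g_{2}$, since it arises from the cancellation of the two $g_{2}$-terms in the derivative formulas, and an error there would propagate into the final factorization.
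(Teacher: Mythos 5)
Your proposal is correct and follows essentially the same route as the paper: differentiate via (\ref{B-0})--(\ref{B-1}), impose $\eta_{1}=-e_{k}$ at a zero to get $F_{k}^{\prime}=\frac{i}{4\pi}\left(\tfrac{g_{2}}{2}-6e_{k}^{2}\right)$, and conclude nonvanishing from the distinctness of $e_{1},e_{2},e_{3}$. The only cosmetic difference is that the paper recognizes $\tfrac{g_{2}}{2}-6e_{k}^{2}$ as $-\wp^{\prime\prime}(\tfrac{\omega_{k}}{2}|\tau)$, while you factor it directly as $-2(e_{i}-e_{k})(e_{j}-e_{k})$; these are the same identity.
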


\begin{proof}
Assume $F_{k}(\tau)=0$ for some $\tau$. Then $\eta_{1}(\tau)=-e_{k}(\tau)$ and
so (\ref{B-0})-(\ref{B-1}) give%
\begin{align*}
F_{k}^{\prime}(\tau)  &  =\frac{i}{4\pi}\left[  2\eta_{1}^{2}-4\left(
e_{k}-\eta_{1}\right)  e_{k}+\frac{1}{2}g_{2}\right] \\
&  =\frac{i}{4\pi}\left[  \frac{1}{2}g_{2}-6e_{k}^{2}\right]  =\frac{-i}{4\pi
}\wp^{\prime \prime}(\tfrac{\omega_{k}}{2}|\tau)\not =0.
\end{align*}
This completes the proof.
\end{proof}

Now we consider $f_{k,\infty}(\tau)$ defined in (\ref{func-0})-(\ref{func-k}).

\begin{proposition}
\label{P_B}For $k\in \{0,1,2,3\}$, $f_{k,\infty}(\tau)$ satisfies the simple
zero property.
\end{proposition}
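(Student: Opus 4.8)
The plan is to imitate the proof of Proposition~\ref{P_A}: assuming $f_{k,\infty}(\tau)=0$ at some $\tau\in\mathbb{H}$, I would compute $f_{k,\infty}'(\tau)$ directly from the differentiation rules \eqref{B-0}, \eqref{B-1}, \eqref{B-2} together with the defining cubic $4e_k^3=g_2e_k+g_3$, and then use the vanishing relation $f_{k,\infty}(\tau)=0$ to collapse the resulting polynomial in $e_k,\eta_1,g_2,g_3$ into a nonzero multiple of exactly the quantity that Lemma~\ref{lem-n=1} forbids to vanish. The outcome I anticipate is that $f_{0,\infty}'(\tau)$ is a constant times the expression in Lemma~\ref{lem-n=1}(ii), and $f_{k,\infty}'(\tau)$ for $k\in\{1,2,3\}$ a constant times the expression in Lemma~\ref{lem-n=1}(iv); nonvanishing is then immediate. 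Conceptually this is the same mechanism that links the simple zero property to the numerators appearing in \eqref{III-17}--\eqref{III-18}.

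For $k=0$, writing $f_{0,\infty}=12\eta_1^2-g_2$ and substituting \eqref{B-0} and \eqref{B-2} gives, after collecting terms,
\[
f_{0,\infty}'(\tau)=\frac{3i}{\pi}\left(4\eta_1^3-g_2\eta_1+g_3\right).
\]
On the zero set one has $g_2=12\eta_1^2$, so $g_2\eta_1=12\eta_1^3$ and $4\eta_1^3-g_2\eta_1+g_3=-8\eta_1^3+g_3=\tfrac12(-4\eta_1^3-g_2\eta_1+2g_3)$. Hence $f_{0,\infty}'(\tau)=\tfrac{3i}{2\pi}(-4\eta_1^3-g_2\eta_1+2g_3)$, which is nonzero by Lemma~\ref{lem-n=1}(ii).

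For $k\in\{1,2,3\}$ I would first record that $e_k(\tau)\neq0$ at any zero of $f_{k,\infty}=3e_k\eta_1+\tfrac{g_2}{2}-3e_k^2$: were $e_k=0$ the value would be $g_2/2$, and $e_k=0$ also forces $g_3=0$, so a vanishing $g_2$ would make two of the $e_j$ coincide, contradicting nondegeneracy; thus $g_2\neq0$ and $f_{k,\infty}\neq0$. Differentiating and inserting \eqref{B-0}, \eqref{B-1}, \eqref{B-2} produces a polynomial that is cubic in $e_k$. The two reduction tools are the zero relation $3e_k\eta_1=3e_k^2-\tfrac{g_2}{2}$, used to eliminate \emph{every} occurrence of $\eta_1$, and the cubic identity $4e_k^3=g_2e_k+g_3$, used to lower the degree in $e_k$. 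Carrying this out, I expect the derivative to collapse to
\[
f_{k,\infty}'(\tau)=\frac{3i}{2\pi}\left[\left(\tfrac{g_2}{2}-3e_k^2\right)\eta_1+\tfrac{g_2}{4}e_k\right],
\]
which is precisely $\tfrac{3i}{2\pi}$ times the quantity shown nonzero in Lemma~\ref{lem-n=1}(iv), completing the argument.

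The only genuine difficulty is bookkeeping in the case $k\in\{1,2,3\}$: the raw expression for $f_{k,\infty}'$ is a long polynomial and the collapse to the Lemma~\ref{lem-n=1}(iv) form is not visible termwise. It is essential to clear $\eta_1$ through the zero relation \emph{first} and only afterwards reduce the powers of $e_k$ via the cubic; doing these in the wrong order reintroduces $\eta_1$. A convenient internal check is that, after multiplying through by $2e_k$, both $2e_k$ times the reduced bracket and $12e_k$ times the Lemma~\ref{lem-n=1}(iv) expression equal $6g_2e_k^2-9g_3e_k-g_2^2$, which fixes the constant $3i/2\pi$. The $k=0$ computation is comparatively routine, and the same reductions also make transparent why this direct derivative method succeeds for $C=\infty$ but, as noted in Section~1, does not readily extend to $C\neq\infty$.
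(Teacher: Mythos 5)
Your proposal is correct, and I verified the two collapse identities on which it rests: for $k=0$ one indeed has the identity $f_{0,\infty}'(\tau)=\frac{3i}{\pi}\left(4\eta_1^3-g_2\eta_1+g_3\right)$, which on the zero set $g_2=12\eta_1^2$ becomes $\frac{3i}{2\pi}\left(-4\eta_1^3-g_2\eta_1+2g_3\right)$; and for $k\in\{1,2,3\}$, using $e_k\neq 0$, the zero relation and the cubic $4e_k^3=g_2e_k+g_3$, the derivative does collapse to $\frac{3i}{2\pi}\left[\left(\frac{g_2}{2}-3e_k^2\right)\eta_1+\frac{g_2}{4}e_k\right]$; multiplying by $e_k$ recovers the paper's factored form $e_kf_{k,\infty}'(\tau)=\frac{i}{2\pi}(2e_k^2+e_ie_j)(e_k^2-4e_ie_j)$, so the two expressions agree. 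Your overall strategy, differentiating via (\ref{B-0})--(\ref{B-2}) and simplifying on the zero set, is the same as the paper's Appendix \ref{DWRT} proof, but the final step is organized genuinely differently: the paper is self-contained, concluding for $k=0$ from $\eta_1'=0$ and the contradiction $g_2^3=27g_3^2$, and for $k\geq 1$ from the factorization $\frac{i}{2\pi}(2a-b)(a+4b)=\frac{i}{2\pi}(e_i-e_k)(e_j-e_k)(e_i-e_j)^2\neq 0$ with $a=e_k^2$, $b=-e_ie_j$; you instead quote Lemma \ref{lem-n=1}(ii) and (iv). There is no circularity in doing so, since that lemma is proved in Section \ref{RTS-PVI} from the Legendre relation, the nonvanishing of the discriminant and a determinant identity, independently of the appendix; and it buys the conceptual point you name: $f_{k,\infty}'$ restricted to its zero set is exactly the numerator of (\ref{III-17})--(\ref{III-18}), the same numerator-versus-denominator mechanism that drives the Painlev\'e VI proof of Theorem \ref{local-1-1}. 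Since the lemma's own proof is precisely the discriminant/determinant computation the paper performs inline, the underlying algebra is identical and only the packaging differs. One cosmetic inconsistency: you say the zero relation is used to ``eliminate every occurrence of $\eta_1$,'' yet your collapsed bracket for $k\geq 1$ still contains $\eta_1$; this is harmless because Lemma \ref{lem-n=1}(iv) is stated for exactly that $\eta_1$-dependent expression, and your $2e_k$-multiplied internal check (both sides equal $6g_2e_k^2-9g_3e_k-g_2^2$ on the zero set) is the fully eliminated version and is also correct.
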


\begin{proof}
Recall%
\[
f_{0,\infty}(\tau)=12\eta_{1}(\tau)^{2}-g_{2}(\tau).
\]
Assume $f_{0,\infty}(\tau)=0$ for some $\tau$. Then (\ref{B-0}) implies
$\eta_{1}^{\prime}\left(  \tau \right)  =0$, so%
\[
f_{0,\infty}^{\prime}(\tau)=-g_{2}^{\prime}(\tau)=\frac{i}{\pi}[3g_{3}%
(\tau)-2\eta_{1}(\tau)g_{2}(\tau)].
\]
If $f_{0,\infty}^{\prime}(\tau)=0$, then $3g_{3}=2\eta_{1}g_{2}$, i.e.
$27g_{3}^{2}=12\eta_{1}^{2}g_{2}^{2}=g_{2}^{3}$, which is a contradiction.
Therefore, $f_{0,\infty}^{\prime}(\tau)\not =0$.

For $k\in \{1,2,3\}$,%
\[
f_{k,\infty}(\tau)=3e_{k}(\tau)\eta_{1}(\tau)+\tfrac{g_{2}(\tau)}{2}%
-3e_{k}(\tau)^{2}.
\]
By (\ref{B-0})-(\ref{B-2}), a direct computation gives%
\begin{equation}
f_{k,\infty}^{\prime}(\tau)=\frac{i}{4\pi}\left[  24e_{k}^{3}-6\eta_{1}%
(6e_{k}^{2}-3\eta_{1}e_{k}-g_{2})-\frac{9}{2}e_{k}g_{2}-6g_{3}\right]  .
\label{B-7}%
\end{equation}
Let $\{i,j\}=\{1,2,3\} \backslash \{k\}$ and denote $a=e_{k}^{2}$,
$b=-e_{i}e_{j}$. Then we easily see from $e_{i}+e_{j}+e_{k}=0$ that%
\begin{equation}
g_{2}=4(a+b),\text{ \ }g_{3}e_{k}=-4ab, \label{B-5}%
\end{equation}%
\[
a+4b=(e_{i}-e_{j})^{2}\not =0,
\]%
\[
2a-b=(e_{k}-e_{i})(e_{k}-e_{j})\not =0.
\]
Assume $f_{k,\infty}(\tau)=0$ for some $\tau$. Then{\allowdisplaybreaks
\begin{equation}
3e_{k}\eta_{1}=3e_{k}^{2}-\tfrac{g_{2}}{2}=a-2b. \label{B-6}%
\end{equation}
Inserting (\ref{B-5}) and (\ref{B-6}) into (\ref{B-7}), we obtain%
\begin{align*}
e_{k}f_{k,\infty}^{\prime}(\tau)  &  =\frac{i}{4\pi}\left[  24e_{k}%
^{4}-2(3e_{k}^{2}-\tfrac{g_{2}}{2})^{2}-\frac{9}{2}e_{k}^{2}g_{2}-6e_{k}%
g_{3}\right] \\
&  =\frac{i}{4\pi}\left[  24a^{2}-2(a-2b)^{2}-18a(a+b)+24ab\right] \\
&  =\frac{i}{2\pi}(2a-b)(a+4b)\not =0,
\end{align*}
}i.e. $f_{k,\infty}^{\prime}(\tau)\not =0$. The proof is complete.
\end{proof}

Finally, we explain why the above approach can not work for $f_{k,C}(\tau)$
with $C\not =\infty$. We take $f_{0,C}(\tau)$ for example. By $\eta_{2}%
=\tau \eta_{1}(\tau)-2\pi i$, we have%
\begin{equation}
f_{0,C}(\tau)=12[(C-\tau)\eta_{1}(\tau)+2\pi i]^{2}-g_{2}(\tau)(C-\tau)^{2}.
\label{B-8}%
\end{equation}
Then%
\begin{align}
f_{0,C}^{\prime}(\tau)  &  =24[(C-\tau)\eta_{1}(\tau)+2\pi i][(C-\tau)\eta
_{1}^{\prime}(\tau)-\eta_{1}(\tau)]\label{B-9}\\
&  -g_{2}^{\prime}(\tau)(C-\tau)^{2}+2g_{2}(\tau)(C-\tau).\nonumber
\end{align}

For function $f(\tau)=F_{k}(\tau)$ or $f_{k,\infty}(\tau)$, the key step in
the proof of Propositions \ref{P_A} and \ref{P_B} is: By $f(\tau)=0$ we can
eliminate the term $\eta_{1}$ in the expression of $f^{\prime}(\tau)$ such
that $f^{\prime}(\tau)$ turns out to be a function of $e_{1},e_{2},e_{3}$
only. Then we can use the property $e_{1}\not =e_{2}\not =e_{3}\not =e_{1}$ to
prove $f^{\prime}(\tau)\not =0$.

However, for function $f_{0,C}(\tau)$ with $C\not =\infty$, (\ref{B-9}),
(\ref{B-0}) and (\ref{B-2}) show that there are both terms $\eta_{1}$ and
$C-\tau$ in the expression of $f_{0,C}^{\prime}(\tau)$. By using $f_{0,C}%
(\tau)=0$, we \emph{can not} eliminate both $\eta_{1}$ and $C-\tau$
simultaneously in the expression of $f_{0,C}^{\prime}(\tau)$, namely we can
not express $f_{0,C}^{\prime}(\tau)$ only in terms of $e_{1},e_{2},e_{3}$.
Therefore, it seems impossible for us to prove $f_{0,C}^{\prime}(\tau)\not =0$
from its expression.

\end{document}